%%%%%%%%%%%%%%%%%%%%%%%%%%%%%%%%%%%
%%% Danielle C. Tarraf                                                                         
%%%                                                                                                          
%%% ArXiv: Revised Version for IEEE Transactions on Automatic Control       
%%% Journal Submission #J5            
%%% Input-Output constructions of rho/mu approximations
%%%                                                                                                          
%%% File first created: December 10, 2013                                            
%%% File updated: December 10 [2013]   
%%% January 13, 14 [2014]
%%% February 13 [2014]
%%% Paper submitted: N. A.                                                                 
%%%%%%%%%%%%%%%%%%%%%%%%%%%%%%%%%%%

\documentclass{article}
                                             
\usepackage{cite}      
\usepackage{graphicx}
\usepackage{float}
\usepackage{subfigure} 
\usepackage{amsmath}
\usepackage{amsthm}
\usepackage{amssymb}
\usepackage{enumerate}
\usepackage{deleq}   % To repeat equation numbering
\usepackage{type1cm} % To be able to use various font sizes
\usepackage{color}

% Redefine the page margins
% TODO: Adjust margins to your liking
\RequirePackage[left=1in,right=1in,top=1in,bottom=1in]{geometry}

\newtheorem{defnt}{Definition}
\newtheorem{remark}{Remark}
\newtheorem{thm}{Theorem}
\newtheorem{lemma}{Lemma}

\newtheorem{prop}{Proposition}

\DeclareMathOperator*{\argmin}{arg\,min}
\DeclareMathOperator*{\argmax}{arg\,max}

\title{An Input-Output Construction of Finite State $\rho / \mu$ Approximations for Control Design}
\author{Danielle C.~Tarraf\footnote{The author is
with the Department of Electrical \& Computer Engineering Department at The Johns Hopkins University, Baltimore, MD, 21218  
(dtarraf@jhu.edu).}}
\date{}

\begin{document}

\maketitle
                    
%%%%%%%%%%%%%%%%%%%%%%%%%%%%%%%%%%%%%%%%%%%%%%%%%%%%%%
%%%%%%%%%%%%%%%%%%%%%%%%%%%%%%%%%%%%%%%%%%%%%%%%%%%%%%
%%%%%%%%%%%%%%%%%%%%%%%%%%%%%%%%%%%%%%%%%%%%%%%%%%%%%%
\begin{abstract}
We consider discrete-time plants that interact with their controllers via fixed discrete alphabets.
For this class of systems,
and in the absence of exogenous inputs,
we propose a general, conceptual procedure for constructing a sequence of finite state approximate models 
starting from finite length sequences of input and output signal pairs.
We explicitly derive conditions under which the proposed construct,
used in conjunction with a particular generalized structure,
satisfies desirable properties of $\rho/\mu$ approximations thereby
leading to nominal deterministic finite state machine models that can be used in certified-by-design controller synthesis.
We also show that the cardinality of the minimal disturbance alphabet 
that can be used in this setting equals that of the sensor output alphabet.
Finally, we show that the proposed construct satisfies a relevant semi-completeness property.
\end{abstract}

%%%%%%%%%%%%%%%%%%%%%%%%%%%%%%%%%%%%%%%%%%%%%%%%%%%%%%
%%%%%%%%%%%%%%%%%%%%%%%%%%%%%%%%%%%%%%%%%%%%%%%%%%%%%%
%%%%%%%%%%%%%%%%%%%%%%%%%%%%%%%%%%%%%%%%%%%%%%%%%%%%%%
\section{Introduction}
\label{Sec:Introduction}

%%%%%%%%%%%%%%%%%%%%%%%%%%%%%%%%%%%%%%%%%%%%%%%%%%%%%%
\subsection{Motivation}
\label{SSec:Motivation}

Cyber-physical systems, involving tightly integrated physical and computational components,
are omni-present in modern engineered systems.
These systems are fundamentally complex, and pose multiple challenges to the control engineer \cite{TR:Lee2008}.
In order to effectively address these challenges, 
there is an inevitable need to move to abstractions 
or model reduction schemes that can handle dynamics 
and computation in a unified framework.
Ideally, 
an abstraction or  model complexity reduction approach should provide a 
lower complexity model that is more easily amenable to analysis, synthesis and optimization,
as well as a rigorously quantifiable assessment of the quality of approximation.
This would allow one to certify the performance of a controller designed for the lower complexity model 
and implemented in the actual system faithfully captured by the original model,
without the need for extensive simulation or testing.

The problem of approximating systems involving dynamics and computation 
(cyber-physical systems)
or discrete and analog effects (hybrid systems)
by simpler systems has been receiving much attention over the past two decades 
\cite{JOUR:AlHeLaPa2000, CHAPTER:TiKh2002}. 
In particular, 
the problem of constructing {\it finite} state approximations of hybrid systems has 
been the object of intense study, 
due to the rampant use of finite state machines as models of computation or software,
as well as their amenability to tractable 
analysis \cite{CONF:TaDaMe2005} and 
control synthesis \cite{BOOK:MatSav2000, JOUR:KoImHi2011}
(though tractable does not always mean computationally efficient!).

%%%%%%%%%%%%%%%%%%%%%%%%%%%%%%%%%%%%%%%%%%%%%%%%%%%%
\subsection{Overview of the Contribution}
\label{SSec: Contribution}

In a previous effort \cite{JOUR:Tarraf2012}, we proposed a notion of finite state 
approximation for `systems over finite alphabets', 
basically plants that are constrained to interact with their feedback controllers 
by sending and receiving signals taking their values in fixed, finite alphabet sets.
We refer to this notion of approximation as a `$\rho/\mu$ approximation', 
to highlight the fact that is is compatible with the analysis \cite{JOUR:TaMeDa2008} 
and synthesis \cite{JOUR:TaMeDa2011} tools we had previously developed 
for systems whose properties and/or performance objectives are described in terms of 
$\rho/\mu$ gain conditions.
Note that the proposed notion of $\rho/\mu$ approximation explicitly identified
those properties that the approximate models need to satisfy in order to enable
certified-by-design controller synthesis.
However, it did not restrict us to a particular constructive algorithm
for generating these approximations.

In this paper, we propose and analyze a new\footnote{Early versions of this construct and its
analysis were presented in \cite{CONF:Tarraf2012,CONF:Tarraf2012a, CONF:Tarraf2013b}
An implementation of this construct demonstrating its application to a specific example was presented in \cite{CONF:AalTar2012}.}
approach for generating $\rho/\mu$ approximations of a given plant and performance objective.
In contrast to the state-space based construction
presented as a simple illustrative example in \cite{JOUR:Tarraf2012}, 
which was specifically tailored to the dynamics in question,
the present construct is a general methodology that is applicable to 
arbitrary plants over finite alphabets provided that: 
(i) They are not subject to exogenous inputs,
and (ii) their outputs are a function of the state only 
(i.e. analogous to strictly proper transfer functions in the LTI setting).

Our construct essentially associates states of the approximate model with 
finite length subsequences of input-output pairs of the plant.
Since the underlying alphabets are finite, 
the set of possible input-output pairs of a given length is also finite.
The resulting approximate models thus have finite state-space,
and are shown to satisfy desirable properties of $\rho/\mu$ approximations under some clearly identified conditions, 
thereby rendering them useable for control synthesis.
Our construct is conceptual,
in the sense that we do not address computational issues 
that may arise due to the complexity of the underlying dynamics.
As such, our contribution is a general methodology, as opposed to a computational framework, 
for generating finite state $\rho/\mu$ approximations,
and a rigorous analysis of the properties of this construct.

%%%%%%%%%%%%%%%%%%%%%%%%%%%%%%%%%%%%%%%%%%%%%%%%
\subsection{Related Work}
\label{SSec:RelatedWork}

Automata and finite state models have been previously employed
as abstractions or approximate models of more complex dynamics for the purpose of control design.
We survey the directions most relevant to our work in what follows.

One research direction makes use of non-deterministic finite state automata 
constructed so that their input/output behavior contains that of the original model
(these approximations are sometimes referred to as `qualitative models') 
\cite{JOUR:Lu1994,JOUR:RaOY1998,JOUR:LuNiSc1999}. 
Controller synthesis can then be formulated as a supervisory control problem, 
addressed using the Ramadge-Wonham framework \cite{JOUR:RaWo1987, JOUR:RaWo1989}.
More recently, progress has been made in reframing these results \cite{JOUR:MoRa1999, JOUR:MoRaOY2002}
in the context of Willems' behavioral theory and $l$-complete systems \cite{MAGAZINE:Wi2007}.
Our construct bears some resemblance to algorithms employed
in constructing qualitative models.
However, our notion of $\rho/\mu$ approximation is fundamentally different
from the notion of qualitative models, as it seeks to explicitly 
quantify the approximation error in the spirit of robust control.

A second research direction,
influenced by the theory of bisimulation in concurrent processes \cite{CHAPTER:Pa1981,BOOK:Mi1989},
makes use of bisimulation and simulation abstractions of the original plant.
These approaches,
which typically address full state feedback problems, effectively ensure that
the set of state trajectories of the original model is exactly matched by (bisimulation),
contained in (simulation),
matched to within some distance $\epsilon$ by (approximate bisimulation),
or contained to within some distance $\epsilon$ in (approximate simulation),
the set of state trajectories of the finite state abstraction 
\cite{JOUR:GiPa2007, JOUR:Ta2008, JOUR:TaAmJuPa2008, JOUR:PoGiTa2008}.
The performance objectives are typically formulated as constraints
on the state trajectories of the original hybrid system,
and controller synthesis is a two step procedure: 
A finite state supervisory controller is first designed, and subsequently refined 
to yield a certified-by-design hybrid controller for the original plant \cite{BOOK:Ta2009}.

Other related research directions make use of 
symbolic models \cite{JOUR:KloBel2008, MAGAZINE:BeBiEgFrKlPa2007} ,
approximating automata \cite{JOUR:CuKrNi1998,JOUR:ChuKro2000, JOUR:Reiszi2011},
and finite quotients of the system \cite{JOUR:ChuKro2001, JOUR:YorBel2010}.
While the subject of input-output robustness of discrete systems
has been garnering more attention recently \cite{CONF:TaBaCSM2012},
we are not aware of any alternative notions of discrete approximation 
developed in conjunction with that work.

Of course, the idea of using finite length sequences of inputs and outputs 
is widely employed in system identification \cite{BOOK:Lj1999}. 
However, the setup of interest to us is fundamentally different for three reasons:
First, the dynamics of the plant are exactly known.
Second, the data can be generated in its entirety.
Third, the data is exact and uncorrupted by noise.

Finally, the present construct differs from our first effort reported in \cite{CONF:TaDu2011}, 
as it approximates the performance objectives as well as the dynamics of the systems,
and moreover leads to a finite state nominal model with deterministic transitions.

%%%%%%%%%%%%%%%%%%%%%%%%%%%%%%%%%%%%%%%%%%%%%%%%%%
\subsection{Organization and Notation}
\label{SSec:OrganizationNotation}

We begin in Section \ref{Sec:Preliminaries} by reviewing the relevant notion of 
$\rho/\mu$ approximation as well as basic concepts that will be useful in our development.
We state the problem of interest in Section \ref{Sec:ProblemSetup}. 
We revisit a special structure in Section \ref{Sec:SpecialStructure}:
We demonstrate its relevance to $\rho/\mu$ approximations,
and we address the related question of disturbance alphabet choice.
We present our construct in Section \ref{Sec:Construction}
and give the intuition behind it.
We show that the resulting approximate models satisfy several of the desired 
$\rho/\mu$ approximation properties in Section \ref{Sec:Properties},
and we address the question of ensuring finiteness of the approximation
error gain.
We demonstrate further relevant properties in Section \ref{Sec:Completeness},
highlighting the completeness of this construct.
We conclude with directions for future work in Section \ref{Sec:Conclusions}.

We employ fairly standard notation:
$\mathbb{Z}_+$ and $\mathbb{R}_+$ denote the non-negative integers and non-negative reals, respectively. 
Given a set $\mathcal{A}$, 
$\mathcal{A}^{\mathbb{Z}_+}$ and $2^{\mathcal{A}}$
denote the set of all infinite sequences over $\mathcal{A}$ 
(indexed by $\mathbb{Z}_+$) 
and the power set of $\mathcal{A}$, respectively. 
The cardinality of a (finite) set $\mathcal{A}$ is denoted by $|\mathcal{A}|$.
Elements of $\mathcal{A}$ and $\mathcal{A}^{\mathbb{Z}_+}$ are denoted by $a$ 
and (boldface) $\mathbf{a}$, respectively. 
For $\mathbf{a} \in \mathcal{A}^{\mathbb{Z}_+}$, $a(i)$ denotes its $i^{th}$ term.
For $f: A \rightarrow B$, $C \subset B$, $f^{-1}(C) = \{ a \in A | f(a) \in C \}$. 
For $f: A \rightarrow B$ and $g: B \rightarrow C$, $g \circ f$ denotes the composition of $f$ and $g$,
that is the function $g \circ f : A \rightarrow C$ defined by $g \circ f(a) = g(f(a))$.
Given $P \subset (\mathcal{U} \times \mathcal{R})^{\mathbb{Z}_+} \times (\mathcal{Y} \times \mathcal{V})^{\mathbb{Z}_+}$
and a choice $\mathbf{u_o} \in \mathcal{U}^{\mathbb{Z}_+}$,
$\mathbf{y_o} \in \mathcal{Y}^{\mathbb{Z}_+}$,
$P|_{\mathbf{u_o,y_o}}$ denotes the (possibly empty) subset of $P$ defined as 
$P |_{\mathbf{u_o},\mathbf{y_o}} = \Big\{ \Big( (\mathbf{u},\mathbf{r}),(\mathbf{y},\mathbf{v}) \Big) \in P \Big| \mathbf{u}=\mathbf{u_o} \textrm{   and   } \mathbf{y}=\mathbf{y_o} \Big\}$.

%%%%%%%%%%%%%%%%%%%%%%%%%%%%%%%%%%%%%%%%%%%%%%%%%%%%%%%%
%%%%%%%%%%%%%%%%%%%%%%%%%%%%%%%%%%%%%%%%%%%%%%%%%%%%%%%%
%%%%%%%%%%%%%%%%%%%%%%%%%%%%%%%%%%%%%%%%%%%%%%%%%%%%%%%%
\section{Preliminaries}
\label{Sec:Preliminaries}

In our development, it is often convenient to view a discrete-time dynamical system as a set of 
feasible signals, even when a state-space description of the system is available.
We thus begin this section by briefly reviewing this `feasible signals' view of systems. 
We then present the recently proposed notion of $\rho/\mu$ approximation 
specialized to the class of systems of interest (namely systems with no exogenous inputs),
and we state the relevant control synthesis result.

%%%%%%%%%%%%%%%%%%%%%%%%%%%%%%%%%%%%%%%%%%%%%%%%%%%%%%%%
\subsection{Systems and Performance Specifications}
\label{SSec:SystemsSpecs}

Readers are referred to \cite{JOUR:TaMeDa2008} for a more detailed treatment
of the basic concepts reviewed in this section.
A discrete-time signal is an infinite sequence over some prescribed set (or `alphabet').

\begin{defnt}
\label{def:system}
A discrete-time system $S$ is a set of pairs of signals, $S \subset \mathcal{U}^{\mathbb{Z}_+} \times \mathcal{Y}^{\mathbb{Z}_+}$,
where $\mathcal{U}$ and $\mathcal{Y}$ are given alphabets.
\end{defnt}

A discrete-time system is thus a process characterized by its feasible signals set. 
This description can be considered an extension of the 
graph theoretic approach \cite{JOUR:GeSm1997} to the finite alphabet setting,
and also shares some similarities with the behavioral approach \cite{MAGAZINE:Wi2007}
though we insist on differentiating between input and output signals upfront.
In this setting, 
system properties of interest are captured by means of integral `$\rho/\mu$ constraints' on the feasible signals.

\begin{defnt} 
\label{def:GainStability}
Consider a system $S \subset \mathcal{U}^{\mathbb{Z}_+} \times \mathcal{Y}^{\mathbb{Z}_+}$ and let $\rho: \mathcal{U} \rightarrow \mathbb{R}$ 
and $\mu: \mathcal{Y} \rightarrow \mathbb{R}$ be given functions. $S$ is \textit{$\rho / \mu$ stable} if there exists a finite 
non-negative constant $\gamma$ such that 
\begin{equation}
\label{eq:gain}
\inf_{T \geq 0} \sum_{t=0}^{T} \gamma \rho (u(t)) - \mu (y(t))  > - \infty.
\end{equation}
is satisfied for all $(\mathbf{u},\mathbf{y})$ in $S$.
\end{defnt}

In particular, when $\rho$, $\mu$ are non-negative (and not identically zero), a notion of `gain' can be defined.

\begin{defnt}
\label{def:Gain}
Consider a system $S \subset \mathcal{U}^{\mathbb{Z}_+} \times \mathcal{Y}^{\mathbb{Z}_+}$.
Assume that $S$ is $\rho / \mu$ stable for $\rho: \mathcal{U} \rightarrow \mathbb{R}_+$ and
$\mu: \mathcal{Y} \rightarrow \mathbb{R}_+$, and that neither function is identically zero.
The $\rho / \mu$ \textit{gain of} $S$ is the infimum of $\gamma$ such that (\ref{eq:gain}) is satisfied.
\end{defnt}
 
Note that these notions of `gain stability' and `gain' can be considered extensions of the 
classical definitions to the finite alphabet setting. 
In particular, when $\mathcal{U}$, $\mathcal{Y}$ are Euclidean vector spaces and
$\rho$, $\mu$ are Euclidean norms, 
we recover $l_2$ stability and $l_2$ gain.
We are specifically interested in discrete-time plants that interact with 
their controllers through fixed discrete alphabets in a setting where no exogenous 
input is present:

\begin{defnt}
\label{def:system}
A \textit{system over finite alphabets} $S$ is a discrete-time system
$S \subset \mathcal{U}^{\mathbb{Z}_+} \times (\mathcal{Y} \times \mathcal{V})^{\mathbb{Z}_+}$
whose alphabets $\mathcal{U}$ and $\mathcal{Y}$ are finite.
\end{defnt}

Here $\mathbf{u} \in \mathcal{U}^{\mathbb{Z}_+}$ 
represents the control input to the plant
while $\mathbf{y} \in \mathcal{Y}^{\mathbb{Z}_+}$ and $\mathbf{v} \in \mathcal{V}^{\mathbb{Z}_+}$
represent the sensor and performance outputs of the plant, respectively.
The plant dynamics may be analog, discrete or hybrid.
Alphabet $\mathcal{V}$ may be finite, countable or infinite.
The approximate models of the plant will be drawn from a specific class of models,
namely deterministic finite state machines:

\begin{defnt}
\label{def:DFM}
A deterministic finite state machine (DFM) is a discrete-time system
$S \subset \mathcal{U}^{\mathbb{Z}_+} \times \mathcal{Y}^{\mathbb{Z}_+}$,
with finite alphabets $\mathcal{U}$ and $\mathcal{Y}$,
whose feasible input and output signals $(\mathbf{u},\mathbf{y}) \in S$ are related by
\begin{eqnarray*}
q(t+1) & = & f (q(t), u(t)) \\
y(t) & = & g(q(t),u(t))
\end{eqnarray*}
where $t \in \mathbb{Z}_+$, $q(t) \in \mathcal{Q}$ for some finite set $\mathcal{Q}$ and 
some functions $f: \mathcal{Q} \times \mathcal{U} \rightarrow \mathcal{Q}$ 
and $g: \mathcal{Q} \times \mathcal{U} \rightarrow \mathcal{Y}$.
\end{defnt}

$\mathcal{Q}$, $f$ and $g$ are understood to represent the set of states of the DFM, 
its state transition map, and its output map, respectively, in the traditional state-space sense.
We single out deterministic finite state machines in which there is no direct feedthrough 
from particular inputs to particular outputs:

\begin{defnt}
A DFM $S \subset (\mathcal{U}_1 \times \hdots \times \mathcal{U}_{n_{I}})^{\mathbb{Z}_+} 
\times 
(\mathcal{Y}_1 \times \hdots \times \mathcal{Y}_{n_{O}})^{\mathbb{Z}_+}$ is 
$\mathcal{U}_i/\mathcal{Y}_j$ strictly proper if its $j^{th}$ output map is of the form
\begin{displaymath}
y_{j} = g_{j} (q(t), u_{1}(t), \hdots, u_{i-1}(t), u_{i+1}(t), \hdots, u_{n_{I}}(t)),
\end{displaymath}
and strictly proper if it is $\mathcal{U}_i/\mathcal{Y}_j$ strictly proper for all
$i \in \{1,\hdots, n_I\}$ and $j \in \{1,\hdots, n_O \}$.
\end{defnt}

Finally, we introduce the following notation for convenience:
Given a system $P \subset \mathcal{U}^{\mathbb{Z}_+} \times (\mathcal{Y} \times \mathcal{V})^{\mathbb{Z}_+}$
and a choice of signals $\mathbf{u_o} \in \mathcal{U}^{\mathbb{Z}_+}$ and
$\mathbf{y_o} \in \mathcal{Y}^{\mathbb{Z}_+}$,
$P|_{\mathbf{u_o,y_o}}$ denotes the subset of feasible signals of $P$ 
whose first component is $\mathbf{u_o}$ and whose second component is $\mathbf{y_o}$.
That is
\begin{displaymath}
P |_{\mathbf{u_o},\mathbf{y_o}} = \Big\{ \Big( \mathbf{u},(\mathbf{y},\mathbf{v}) \Big) \in P \Big| \mathbf{u}=\mathbf{u_o} \textrm{   and   } \mathbf{y}=\mathbf{y_o} \Big\}.
\end{displaymath}
Note that $P|_{\mathbf{u_o,y_o}}$ may be an empty set for specific choices of $\mathbf{u_o}$ and $\mathbf{y_o}$.

%%%%%%%%%%%%%%%%%%%%%%%%%%%%%%%%%%%%%%%%%%%%%%%%%%%
\subsection{$\rho/ \mu$ Approximations for Control Synthesis}
\label{SSec:Approximation}

The following definition is adapted from \cite{JOUR:Tarraf2012} for the case where the plant is not
subject to exogenous inputs, of interest in this paper.
Note that in the absence of exogenous input, function $\rho$ drops out of the definition.
Nonetheless, we will continue to call this a ``$\rho/\mu$ approximation"
in keeping with the previously established terminology.

   \begin{figure*}[thpb]
       \centering
       \includegraphics[scale=0.4]{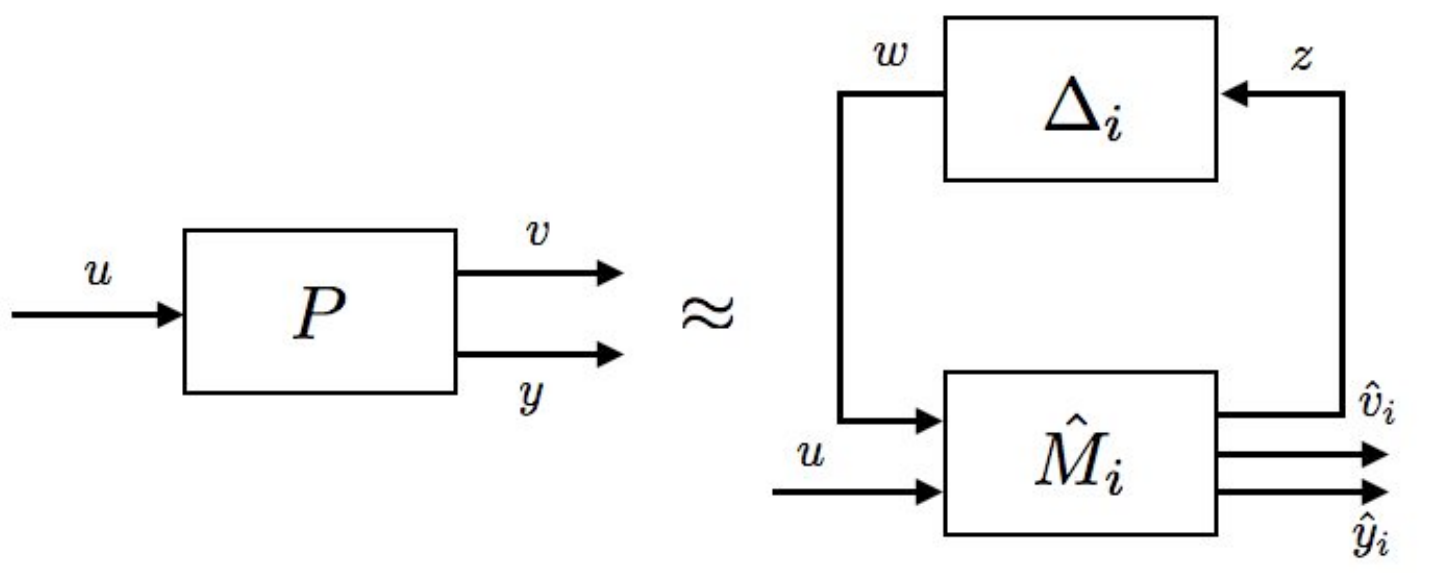}
       \caption{A finite state approximation of $P$}
       \label{Fig:Approximation}
   \end{figure*}

\begin{defnt}(Adapted from Definition 6 in \cite{JOUR:Tarraf2012})
\label{Def:DFMApproximation}
Consider a system over finite alphabets $P \subset \mathcal{U}^{\mathbb{Z}_+} \times (\mathcal{Y} \times \mathcal{V})^{\mathbb{Z}_+}$
and a desired closed loop performance objective
\begin{eqnarray}
\nonumber
\inf_{T \geq 0} \sum_{t=0}^{T} - \mu(v(t)) > -\infty \Leftrightarrow \\
\label{eq:ObjectiveP}
\sup_{T \geq 0} \sum_{t=0}^{T} \mu(v(t)) < \infty
\end{eqnarray}
for given function $\mu: \mathcal{V} \rightarrow \mathbb{R}$.
A sequence $\{\hat{M}_i\}_{i=1}^{\infty}$ of deterministic finite state machines
$\hat{M}_i \subset (\mathcal{U} \times \mathcal{W})^{\mathbb{Z}_+} 
\times (\mathcal{Y} \times \hat{\mathcal{V}}_i \times \mathcal{Z})^{\mathbb{Z}_+}$
with $\hat{\mathcal{V}}_i \subset \mathcal{V}$ is a {\boldmath $\rho / \mu$} 
\textbf{approximation} of $P$
if there exists a corresponding sequence of systems $\{\Delta_i\}_{i=1}^{\infty}$,
 $\Delta_i \subset \mathcal{Z}^{\mathbb{Z}^+} \times \mathcal{W}^{\mathbb{Z}_+}$, 
and non-zero functions $\rho _{\Delta}:\mathcal{Z}^{\mathbb{Z}_+} \rightarrow \mathbb{R}_+$,
$\mu_{\Delta}: \mathcal{W}^{\mathbb{Z}_+} \rightarrow \mathbb{R}_+$, such that for every $i$:

\begin{enumerate}[a)]

\item There exists a surjective map $\psi_i: P \rightarrow \hat{P}_i$
satisfying
\begin{equation}
\label{Eq:MapCondition}
\psi_i \Big( P|_{\mathbf{u},\mathbf{y}} \Big) \subseteq \hat{P}_i |_{\mathbf{u},\mathbf{y}}
\end{equation}
for all $(\mathbf{u},\mathbf{y}) \in \mathcal{U}^{\mathbb{Z_+}} \times \mathcal{Y}^{\mathbb{Z}_+}$,
where 
$\hat{P}_i \subset \mathcal{U}^{\mathbb{Z}_+} \times (\mathcal{Y} \times \hat{\mathcal{V}}_i)^{\mathbb{Z}_+} $ 
is the feedback interconnection of $\hat{M}_i$ and $\Delta_i$ as shown in Figure \ref{Fig:Approximation}.

\item For every feasible signal
$(\mathbf{u},(\mathbf{y},\mathbf{v})) \in P$, we have
\begin{equation}
\label{eq:Objectivenounds}
\mu(v(t)) \leq \mu(\hat{v}_{i+1}(t)) \leq \mu(\hat{v}_i(t)),  
\end{equation}
for all $t \in \mathbb{Z}_+$, where 
$$ (\mathbf{u},(\mathbf{\hat{y}_i},\mathbf{\hat{v}_i})) = \psi_i \Big( ( \mathbf{u},(\mathbf{y},\mathbf{v}))  \Big),$$
$$(\mathbf{u},(\mathbf{\hat{y}_{i+1}},\mathbf{\hat{v}_{i+1}})) = \psi_{i+1} \Big( (\mathbf{u},(\mathbf{y},\mathbf{v}))  \Big).$$

\item $\Delta_i$ is $\rho_{\Delta} / \mu_{\Delta}$ gain stable,
and moreover, the corresponding $\rho_{\Delta} / \mu_{\Delta}$ gains satisfy $\gamma_{i} \geq \gamma_{i+1}$. 

\end{enumerate}  
\end{defnt}
\smallskip

\begin{remark}
Intuitively,
the quality of the $i^{th}$ approximation is captured by the gain $\gamma_i$
of the approximation error system $\Delta_i$ (in condition c)),
and the gap between the original and auxiliary performance objectives (the outer inequality in condition b)).
We do not require strict inequalities in conditions b) and c),
to allow for instances where the sequence of approximate models recovers the original plant exactly 
after a finite number of steps (i.e. for some finite value of $i$),
or alternatively,
instances where it may not converge\footnote{Indeed, 
it is not clear to us that every system should admit an \emph{arbitrarily close finite state} approximation!} at all, 
but nonetheless provides a good enough approximation for the control problem at hand.
\end{remark}

Next, we review a result demonstrating that a $\rho/\mu$ approximation of the plant together with a new,
appropriately defined performance objective
may be used to synthesize certified-by-design controllers for the original plant and performance objective:

\begin{thm} (Adapted from Theorems 1 and 3 in \cite{JOUR:Tarraf2012})
Consider a plant $P$ and a $\rho/\mu$ approximation $\{\hat{M}_i\}_{i=1}^{\infty}$ as in Definition \ref{Def:DFMApproximation}.
If for some index $i$, there exists a controller $K \subset \mathcal{Y}^{\mathbb{Z}_+} \times \mathcal{U}^{\mathbb{Z}_+}$
such that the feedback interconnection of $\hat{M}_i$ and $K$,
$(\hat{M}_i,K) \subset \mathcal{W}^{\mathbb{Z}_+} \times (\hat{\mathcal{V}}_i \times \mathcal{Z})^{\mathbb{Z}_+}$,
satisfies
\begin{equation}
\label{eq:ObjectiveM}
\inf_{T \geq 0} \sum_{t=0}^{T} \tau \mu_{\Delta}(w(t)) - \mu(\hat{v}(t)) - \tau \gamma_i \rho_{\Delta}(z(t)) > -\infty
\end{equation}
for some $\tau >0$,
then the feedback interconnection of $P$ and $K$,
$(P,K) \subset \mathcal{V}^{\mathbb{Z}_+}$,
satisfies (\ref{eq:ObjectiveP}).
\end{thm}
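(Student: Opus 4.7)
The plan is to show that any feasible trajectory of the closed loop $(P,K)$ can be lifted through $\psi_i$ to a corresponding trajectory of $\hat{M}_i$ closed with $K$ and driven by $\Delta_i$, and then to pull the hypothesized bound on $\hat{v}$ back to $v$ using property (b). Concretely, I would fix an arbitrary feasible signal $(\mathbf{u},(\mathbf{y},\mathbf{v}))$ of $(P,K)$ and apply $\psi_i$ to obtain $(\mathbf{u},(\hat{\mathbf{y}}_i,\hat{\mathbf{v}}_i))\in \hat{P}_i$. The first task is to verify that this image is compatible with the controller: property (a) states that $\psi_i(P|_{\mathbf{u},\mathbf{y}})\subseteq \hat{P}_i|_{\mathbf{u},\mathbf{y}}$, so the first two coordinates $(\mathbf{u},\mathbf{y})$ of the trajectory are preserved, and since $(\mathbf{y},\mathbf{u})\in K$, the lifted trajectory is closed-loop compatible with $K$.

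Next I would unfold the interconnection structure of $\hat{P}_i$: because $\hat{P}_i$ is the feedback loop of $\hat{M}_i$ with $\Delta_i$, there must exist signals $\mathbf{w}\in\mathcal{W}^{\mathbb{Z}_+}$ and $\mathbf{z}\in\mathcal{Z}^{\mathbb{Z}_+}$ such that $(\mathbf{u},\mathbf{w},\mathbf{y},\hat{\mathbf{v}}_i,\mathbf{z})$ is feasible for $\hat{M}_i$ and $(\mathbf{z},\mathbf{w})$ is feasible for $\Delta_i$. Closing $(\mathbf{u},\mathbf{y})$ with $K$ then yields a feasible trajectory of $(\hat{M}_i,K)$ carrying the signals $(\mathbf{w},\hat{\mathbf{v}}_i,\mathbf{z})$, to which the hypothesis applies, and simultaneously carrying a $(\mathbf{z},\mathbf{w})$ pair in $\Delta_i$, to which property (c) applies.

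The core of the argument is then an additive combination. The hypothesis gives
\begin{equation*}
\inf_{T\geq 0}\sum_{t=0}^{T}\bigl[\tau \mu_{\Delta}(w(t))-\mu(\hat{v}_i(t))-\tau\gamma_i\rho_{\Delta}(z(t))\bigr] > -\infty,
\end{equation*}
while property (c), scaled by $\tau>0$, gives
\begin{equation*}
\inf_{T\geq 0}\sum_{t=0}^{T}\bigl[\tau\gamma_i\rho_{\Delta}(z(t))-\tau\mu_{\Delta}(w(t))\bigr]>-\infty.
\end{equation*}
Adding these two bounds term by term cancels the $\tau\mu_{\Delta}(w)$ and $\tau\gamma_i\rho_{\Delta}(z)$ contributions and yields $\inf_T \sum_{t=0}^{T}-\mu(\hat{v}_i(t))>-\infty$, i.e.\ $\sup_T\sum_{t=0}^{T}\mu(\hat{v}_i(t))<\infty$. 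Finally, property (b) provides the pointwise domination $\mu(v(t))\leq \mu(\hat{v}_i(t))$, so $\sum_{t=0}^{T}\mu(v(t))\leq \sum_{t=0}^{T}\mu(\hat{v}_i(t))$ for every $T$, and hence the closed loop $(P,K)$ satisfies (\ref{eq:ObjectiveP}).

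The main obstacle, I expect, is not in the algebra but in keeping the signal bookkeeping straight: one must carefully justify that the $(\mathbf{w},\mathbf{z})$ produced by the unfolding of $\hat{P}_i$ is exactly the pair that appears in both the hypothesis on $(\hat{M}_i,K)$ and the gain condition on $\Delta_i$, so that the two inequalities can be added along the same trajectory. Property (a)'s insistence on preserving the $(\mathbf{u},\mathbf{y})$ coordinates is the crucial structural ingredient that makes this matching possible and that allows $K$ to be reused verbatim on the approximation.
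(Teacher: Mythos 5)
Your argument is correct, and it is worth noting up front that the paper itself offers no proof of this statement --- it is imported verbatim (``adapted from Theorems 1 and 3'') from \cite{JOUR:Tarraf2012} and stated as a known result --- so there is nothing in this document to compare against. Your proof is the intended mechanism that Definition \ref{Def:DFMApproximation} is engineered to support: property (a) guarantees the lift preserves the $(\mathbf{u},\mathbf{y})$ coordinates so that $K$ closes the loop on the image trajectory exactly as it did on the original; unfolding $\hat{P}_i$ produces a single pair $(\mathbf{z},\mathbf{w})$ that simultaneously witnesses a feasible signal of $(\hat{M}_i,K)$ and of $\Delta_i$; the superadditivity of $\inf_T$ over sums lets you add the two dissipation inequalities and cancel the $\tau\mu_{\Delta}(w)$ and $\tau\gamma_i\rho_{\Delta}(z)$ terms; and property (b) transports the resulting bound on $\sum\mu(\hat{v}_i)$ down to $\sum\mu(v)$ termwise. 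One minor caveat you might flag: the cancellation step uses the gain inequality for $\Delta_i$ \emph{at} $\gamma=\gamma_i$, whereas Definition \ref{def:Gain} defines $\gamma_i$ only as an infimum; if that infimum is not attained, the hypothesis of the theorem should be read with a $\gamma\geq\gamma_i$ for which (\ref{eq:gain}) actually holds. This is a feature of the theorem as stated, not a defect of your argument.
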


\begin{remark}
In practice, the entire sequence of approximations is not constructed upfront:
Rather, the first element is constructed and control synthesis is attempted.
If synthesis fails, the next element of the sequence is constructed,
and so the process continues.
\end{remark}

Finally, synthesizing a {\it full state} feedback controller for a given DFM in order to satisfy given 
performance objectives of the form (\ref{eq:ObjectiveM}), 
for a {\it given} value of $\tau >0$, is a readily solvable problem:

\begin{thm} (Adapted from Theorem 4 in \cite{JOUR:TaMeDa2011})
\label{Thm:DPsynthesis}
Consider a DFM $M$ with state transition equation
$$ q(t+1)=f(q(t),u(t),w(t)),$$
and let $\sigma : \mathcal{Q} \times \mathcal{U} \times \mathcal{W} \rightarrow \mathbb{R}$ be given.
There exists a $\varphi:\mathcal{Q} \rightarrow \mathcal{U}$ such that the closed loop system $(M,\varphi)$ satisfies
\begin{equation}
\label{eq:SigmaObjective}
\inf_{T \geq 0} \sum_{t=0}^{T} \sigma(q(t),\varphi(q(t)),w(t)) > -\infty.
\end{equation}
iff 
the sequence of functions $J_k : \mathcal{Q} \rightarrow \mathbb{R}$, $k \in \mathbb{Z}_+$, defined recursively by
\begin{eqnarray}
\label{eq:Jsequence}
J_0 & = & 0 \\ 
\nonumber
J_{k+1} & = & \max \{0,\mathbb{T}(J_k)\}  
\end{eqnarray}
where 
$\displaystyle \mathbb{T}(J(q)) = \min_{u \in \mathcal{U}} \max_{w \in \mathcal{W}} \{-\sigma(q,u,w) + J(f(q,u,w)) \}$,
converges.
\end{thm}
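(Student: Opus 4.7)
My plan is to treat this as a standard dynamic programming / storage function argument tailored to the two-player (controller vs.\ disturbance) setting over a finite state space. The key observation is that the operator $\mathbb{T}$ is monotone (if $J \leq J'$ pointwise then $\mathbb{T}(J) \leq \mathbb{T}(J')$ by inspection of $\min_u \max_w$), and hence so is the iteration map $\mathcal{T}_0(J) := \max\{0,\mathbb{T}(J)\}$. Since $J_0 \equiv 0 \leq J_1$ trivially, induction yields that $\{J_k\}$ is monotone non-decreasing pointwise on the \emph{finite} set $\mathcal{Q}$. Thus convergence is equivalent to pointwise boundedness, and the theorem reduces to showing this boundedness is equivalent to the existence of a stabilizing full-state feedback.

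For the sufficiency (``if'') direction, suppose $J_k \to J^*$. Then $J^*$ is a fixed point of $\mathcal{T}_0$, so $J^* \geq 0$ and $J^*(q) \geq \min_u \max_w\{-\sigma(q,u,w) + J^*(f(q,u,w))\}$. Since $\mathcal{U}$ is finite, a pointwise minimizer exists; define $\varphi(q)$ to be one such minimizer. Then for every $q$ and $w$,
\begin{equation*}
\sigma(q,\varphi(q),w) \;\geq\; J^*(f(q,\varphi(q),w)) - J^*(q).
\end{equation*}
Telescoping along any closed-loop trajectory gives $\sum_{t=0}^{T} \sigma(q(t),\varphi(q(t)),w(t)) \geq J^*(q(T{+}1)) - J^*(q(0)) \geq -J^*(q(0))$, which is a finite lower bound uniformly in $T$, establishing (\ref{eq:SigmaObjective}).

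For the necessity (``only if'') direction, assume $\varphi$ exists and satisfies (\ref{eq:SigmaObjective}) along every closed-loop trajectory. Define the worst-case storage function
\begin{equation*}
V(q) \;=\; \max\Bigl\{0,\; \sup_{\mathbf{w}} \sup_{T \geq 0} \sum_{t=0}^{T} -\sigma(q(t),\varphi(q(t)),w(t))\Bigr\}
\end{equation*}
with $q(0)=q$. Assuming finiteness of $V$ for the moment, a one-step decomposition of the inner supremum (isolating the action at $t=0$ and invoking the semigroup/shift structure of the closed loop) yields
\begin{equation*}
V(q) \;\geq\; \max\Bigl\{0,\; \max_w\{-\sigma(q,\varphi(q),w)+V(f(q,\varphi(q),w))\}\Bigr\} \;\geq\; \mathcal{T}_0(V)(q),
\end{equation*}
so $V$ is a supersolution of the fixed-point equation. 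A monotone induction then shows $J_k \leq V$ for all $k$, and hence the bounded monotone sequence $\{J_k\}$ converges pointwise on the finite set $\mathcal{Q}$.

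The main obstacle I anticipate is justifying that $V$ defined above is actually finite, because the hypothesis only gives pointwise-in-$\mathbf{w}$ (not uniform) boundedness of the partial sums. Here I would exploit the finiteness of $\mathcal{Q}$, $\mathcal{U}$, $\mathcal{W}$: the closed loop is a finite labeled graph, so unboundedness of $\sup_{\mathbf{w},T}\sum -\sigma$ from some $q_0$ would require the presence of a reachable cycle with strictly positive cumulative $-\sigma$; but then the adversary could drive the state around that cycle indefinitely with a \emph{single} disturbance sequence, contradicting the assumed lower bound on $\sum \sigma$ along that particular trajectory. This cycle-pumping argument promotes pointwise boundedness to uniform boundedness and is, I expect, the most delicate step of the proof.
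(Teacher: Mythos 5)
The paper does not actually prove this statement: it is imported verbatim (``Adapted from Theorem 4 in \cite{JOUR:TaMeDa2011}'') and used as a black box, so there is no in-paper proof to compare yours against. Judged on its own merits, your argument is correct and is the standard one for such value-iteration characterizations: monotonicity of $\mathbb{T}$ plus $J_0 = 0 \leq J_1$ gives a pointwise non-decreasing sequence on the finite set $\mathcal{Q}$, reducing convergence to boundedness; the ``if'' direction is the usual storage-function/telescoping argument from a fixed point $J^* \geq \max\{0,\mathbb{T}(J^*)\}$ with $\varphi$ a pointwise minimizer; the ``only if'' direction bounds $J_k$ above by the worst-case cost-to-go $V$. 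Two points deserve the care you partly gave them. First, the one-step inequality $V(q) \geq -\sigma(q,\varphi(q),w) + V(f(q,\varphi(q),w))$ needs a word, since $V = \max\{0,W\}$ and the inner supremum $W(f(\cdot))$ may be negative while $V(f(\cdot)) = 0$; the $T=0$ term $W(q) \geq -\sigma(q,\varphi(q),w)$ covers exactly that case, so the inequality does hold, but you should say so explicitly. Second, the finiteness of $V$ is indeed the only nontrivial step, and your cycle-pumping argument is the right fix: arbitrarily large partial sums of $-\sigma$ over the finite closed-loop graph force a reachable cycle of strictly positive cumulative $-\sigma$ (decompose any long walk into a bounded-cost simple path plus cycles), and traversing that cycle forever is a \emph{single} disturbance sequence violating the assumed bound. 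With those two steps written out, the proof is complete; whether it coincides with the proof in \cite{JOUR:TaMeDa2011} cannot be checked from this paper alone, but it is the natural argument for this class of results.
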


Note that in particular, a gain condition such as (\ref{eq:ObjectiveM}), can be written in the form 
(\ref{eq:SigmaObjective}) as the outputs $\hat{v}$ and $z$ of $\hat{M}_i$ 
are functions of the state of $\hat{M}_i$ and its inputs.

%%%%%%%%%%%%%%%%%%%%%%%%%%%%%%%%%%%%%%%%%%%%%%%%%%%%%%
%%%%%%%%%%%%%%%%%%%%%%%%%%%%%%%%%%%%%%%%%%%%%%%%%%%%%%
%%%%%%%%%%%%%%%%%%%%%%%%%%%%%%%%%%%%%%%%%%%%%%%%%%%%%%
\section{Problem Setup}
\label{Sec:ProblemSetup}

Given a discrete-time plant $P$ described by
\begin{eqnarray}
\label{eq:Plant}
\nonumber
x(t+1) & = &  f(x(t),u(t)) \\ 
y(t) & = & g(x(t)) \\
\nonumber
v(t) & = & h(x(t))
\end{eqnarray}
where $t \in \mathbb{Z}_+$,
$x(t) \in \mathbb{R}^n$,
$u(t) \in \mathcal{U}$, $y(t) \in \mathcal{Y}$, $v(t) \in \mathcal{V}$,
and functions $f: \mathbb{R}^n \times \mathcal{U} \rightarrow \mathbb{R}^n$,
$g: \mathbb{R}^n \rightarrow \mathcal{Y}$
and $h: \mathbb{R}^n \rightarrow \mathcal{V}$ are given.
No apriori constraints are placed on the alphabet set $\mathcal{V}$:
It may be a Euclidean space, the set of reals, or a countable or finite set.
$\mathcal{U}$ and $\mathcal{Y}$ are
given finite alphabets with $| \mathcal{U}| = m$ and $|\mathcal{Y}|=p$, respectively:
They may represent quantized values of some analog inputs and outputs, 
or they may simply be symbolic inputs and outputs in general.  
We are also given a performance objective
\begin{equation}
\tag{\ref{eq:ObjectiveP}}
\sup_{T \geq 0} \sum_{t=0}^{T} \mu(v(t)) < \infty.
\end{equation}
Our goals are twofold:
\begin{enumerate}
\item To provide a systematic methodology for constructing a $\rho/\mu$ approximation of $P$.
\item To rigorously analyze the relevant properties of this construct.
\end{enumerate}

%%%%%%%%%%%%%%%%%%%%%%%%%%%%%%%%%%%%%%%%%%%%%%%%%%%%%%
%%%%%%%%%%%%%%%%%%%%%%%%%%%%%%%%%%%%%%%%%%%%%%%%%%%%%%
%%%%%%%%%%%%%%%%%%%%%%%%%%%%%%%%%%%%%%%%%%%%%%%%%%%%%%
\section{A Special Structure }
\label{Sec:SpecialStructure}

In \cite{CHAPTER:TaMeDa2007},
we proposed a special `observer-inspired' structure and used it in conjunction with
a particular state-space based construct 
in order to approximate and subsequently design stabilizing controllers for a special class of systems,
namely switched second order homogenous systems with binary outputs.
In what follows, we begin in Section \ref{SSec:ChoiceW} by proposing a slight generalization of this structure,
by modifying it to allow for arbitrary (i.e. not necessarily binary) finite sensor output alphabets.
We also address the related question of minimal construction of the disturbance alphabet set $\mathcal{W}$.
Next, we show in Section \ref{SSec:ExistenceOfMap} that under one additional assumption, 
this generalized structure ensures the existence of 
function $\psi_i$ as required in property a) of Definition \ref{Def:DFMApproximation}.

%%%%%%%%%%%%%%%%%%%%%%%%%%%%%%%
\subsection{Generalized Structure and Minimal Choice of $\mathcal{W}$}
\label{SSec:ChoiceW}

 \begin{figure*}[thpb]
       \centering
       \includegraphics[scale=0.4]{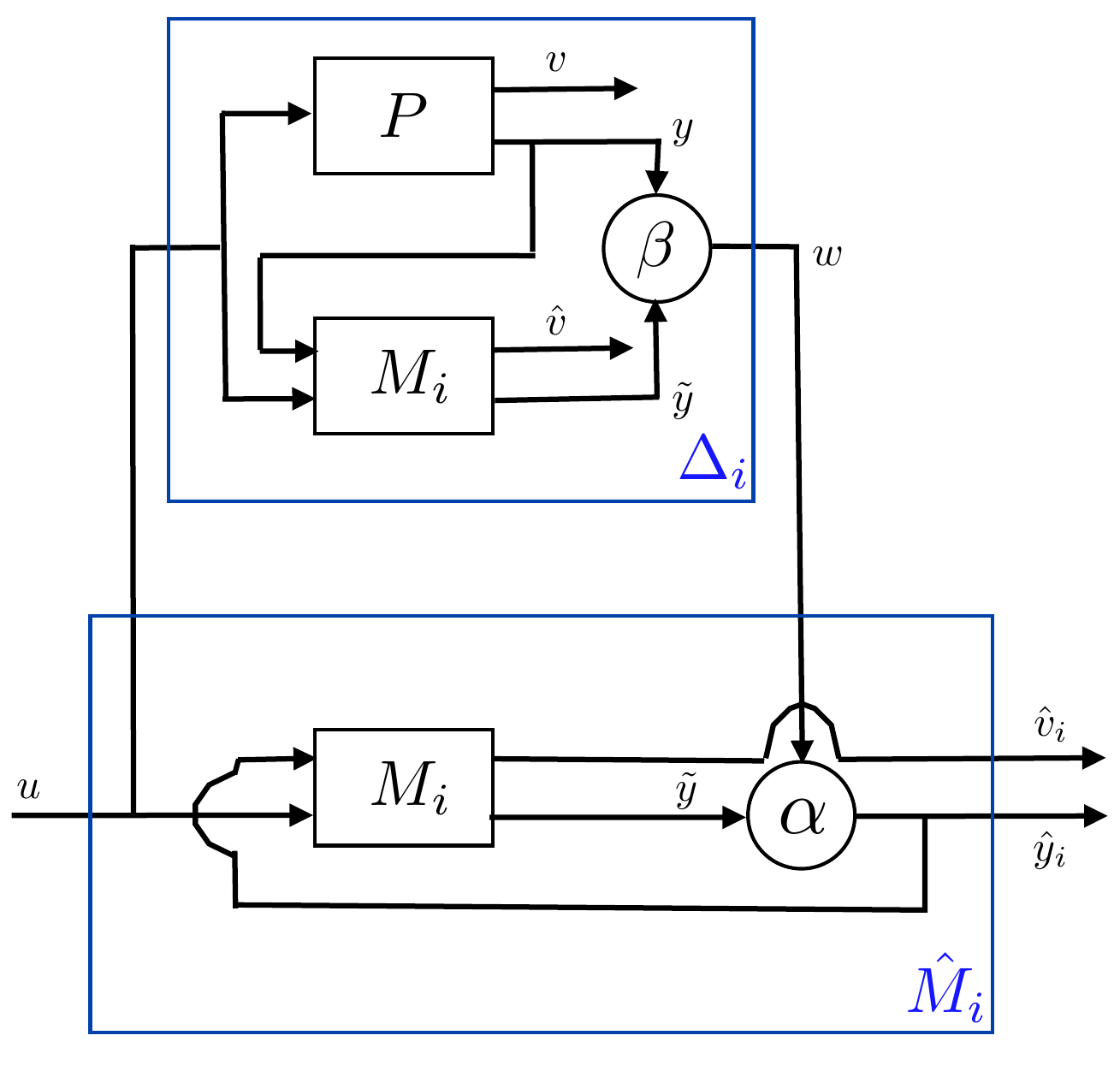}
       \caption{Proposed generalized structure for $\hat{M}_i$ and $\Delta_i$}
       \label{Fig:Structure}
   \end{figure*}

Consider the structure for $\hat{M}_i$ and $\Delta_i$ shown in Figure \ref{Fig:Structure},
where $M_i$ is a DFM.
To ensure that the interconnection is well-posed, 
we require $M_i$ to be $\mathcal{Y}/\mathcal{Y}$ strictly proper:
That is, its instantaneous output $\tilde{y}(t)$ is not an explicit function of its instantaneous input $y(t)$.

Noting that there is no loss of generality in 
assuming that a finite set $\mathcal{W}$ with cardinality $r+1$ is given by $\mathcal{W} = \{ 0,\hdots, r\}$,
we begin by showing that when $P$ is a system over finite alphabets, 
it is always possible to construct functions $\alpha$ and $\beta$ satisfying the property:
\begin{equation}
\label{Eq:AlphaBetaCondition}
\alpha \Big( \tilde{y}, \beta(\tilde{y},y) \Big)=y, \textrm{            for all   } y, \tilde{y} \in \mathcal{Y}.
\end{equation}
The relevance of this property will become clear in Section \ref{SSec:ExistenceOfMap}:
Intuitively,
$\beta$ and $\alpha$ play the role of subtraction and addition in the finite alphabet setting.

%%%%%
\smallskip
\begin{prop}
\label{Prop:AlphaBetaCondition}
Consider an alphabet set $\mathcal{Y}$ with $|\mathcal{Y}|=p$
and a set $\mathcal{W} = \{0, \hdots,r \}$.
For sufficiently large $r$, there always exists functions
$\beta: \mathcal{Y \times \mathcal{Y} \rightarrow \mathcal{W}}$ 
and 
$\alpha: \mathcal{Y} \times \mathcal{W} \rightarrow \mathcal{Y} \cup \{\epsilon\}$ 
such that (\ref{Eq:AlphaBetaCondition}) holds.
\end{prop}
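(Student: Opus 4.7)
The key observation is that the required identity $\alpha(\tilde{y},\beta(\tilde{y},y))=y$ forces, for each fixed $\tilde{y}\in\mathcal{Y}$, the partial map $\beta(\tilde{y},\cdot):\mathcal{Y}\to\mathcal{W}$ to be injective: if two distinct values $y_1\neq y_2$ were mapped to the same symbol $w$, then $\alpha(\tilde{y},w)$ could not simultaneously equal both. Consequently, a necessary condition is $|\mathcal{W}|\geq|\mathcal{Y}|$, i.e.\ $r+1\geq p$. The proof plan is to show this condition is also sufficient by an explicit construction.

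The plan is to take any $r\geq p-1$ and enumerate $\mathcal{Y}=\{y_0,y_1,\ldots,y_{p-1}\}$. Define $\beta$ to be independent of its first argument by setting $\beta(\tilde{y},y_k)=k$ for every $\tilde{y}\in\mathcal{Y}$ and every $k\in\{0,\ldots,p-1\}$. This is well-defined because $k\leq p-1\leq r$, so the image lies in $\mathcal{W}$. Then define $\alpha:\mathcal{Y}\times\mathcal{W}\to\mathcal{Y}\cup\{\epsilon\}$ by
\[
\alpha(\tilde{y},w)=\begin{cases} y_w & \text{if } 0\leq w\leq p-1,\\ \epsilon & \text{if } p\leq w\leq r.\end{cases}
\]
(Again, $\alpha$ can be taken to be independent of $\tilde{y}$ in this simplest construction.)

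Verification is then immediate: for any $y=y_k\in\mathcal{Y}$ and any $\tilde{y}\in\mathcal{Y}$, we compute $\beta(\tilde{y},y_k)=k\in\{0,\ldots,p-1\}$, hence $\alpha(\tilde{y},\beta(\tilde{y},y_k))=\alpha(\tilde{y},k)=y_k=y$, as required. There is no real obstacle here: the statement is essentially a pigeonhole/injectivity observation, and the only subtle point is noting that the $\epsilon$ slot is used exactly when $w$ lies outside the range of $\beta(\tilde{y},\cdot)$, so the freedom to let $\alpha$ return $\epsilon$ on the unused symbols of $\mathcal{W}$ is what lets us accommodate the case $r+1>p$ without contradiction. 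It is worth remarking that the construction in fact allows $\beta$ to depend nontrivially on $\tilde{y}$ (any family of injections $\beta(\tilde{y},\cdot):\mathcal{Y}\to\mathcal{W}$ works, with the matching $\alpha(\tilde{y},\cdot)$ defined as the left-inverse on the image and $\epsilon$ on the complement), a flexibility that will presumably matter for the later minimality discussion of $\mathcal{W}$ in Section~\ref{SSec:ChoiceW}.
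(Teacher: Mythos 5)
Your proof is correct, but it takes a different (and sharper) route than the paper. The paper's proof sets $r=p^2-1$ and lets $\beta$ be a bijection from $\mathcal{Y}\times\mathcal{Y}$ onto $\mathcal{W}$, assigning a distinct symbol to every ordered pair; well-definedness of $\alpha$ is then automatic because each $w$ encodes a unique pair. You instead take $r=p-1$ and make $\beta$ independent of its first argument, which is the most economical construction possible: your opening injectivity observation shows $|\mathcal{W}|\geq p$ is necessary, and your construction shows it is sufficient. In effect you have proved not just Proposition~\ref{Prop:AlphaBetaCondition} but essentially all of Lemma~\ref{Lemma:CardinalityW} ($p^*=p$), whose paper proof uses a cyclic Latin-square table to get a $\beta$ that each row hits $\mathcal{W}$ bijectively --- your constant-in-$\tilde{y}$ table is an equally valid instance of the same idea, and your injectivity argument for the lower bound is cleaner than the paper's informal ``some element would have to appear twice in each row'' remark. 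The only thing the paper's $p^2$ construction buys is that it requires no enumeration or ordering of $\mathcal{Y}$ and no thought about which rows can share symbols; since the proposition only asks for \emph{sufficiently large} $r$, both are complete proofs, but yours renders the subsequent minimality discussion partly redundant.
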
 

\begin{proof}
The proof is by construction.
Let $r= p^2-1$. 
Note that $|\mathcal{W}| = |\mathcal{Y}^2|=p^2$,
and there thus exists a bijective map $\beta : \mathcal{Y}^2 \rightarrow \mathcal{W}$
that associates with every pair
$(y_1,y_2) \in \mathcal{Y}^2$ a unique element of $\mathcal{W}$.
Now consider $\alpha: \mathcal{Y} \times \mathcal{W}  \rightarrow \mathcal{Y} \cup \{\epsilon\}$
defined by
\begin{displaymath}
\alpha(\tilde{y},w)= \left\{ \begin{array}{cc}
y & \textrm{          if    } \beta(\tilde{y},y)=w \\
\epsilon & \textrm{          otherwise}
\end{array} \right. .
\end{displaymath}
We have $ \alpha \Big( \tilde{y}, \beta(\tilde{y},y) \Big)=y $
for all $y, \tilde{y} \in \mathcal{Y}$, as desired.
\end{proof}
\smallskip

We next direct our attention in this setting to the choice of alphabet set $\mathcal{W}$.
A set with minimal cardinality is desirable, 
as the complexity of solving the full state feedback control synthesis problem grows with the cardinality of $\mathcal{W}$, 
as seen in the definition of $\mathbb{T}(J(q))$ in Theorem \ref{Thm:DPsynthesis}.
We thus answer the following question: 
What is the minimal cardinality of $\mathcal{W}$ for which one can construct 
functions $\beta$ and $\alpha$ with the desired property (\ref{Eq:AlphaBetaCondition})? 

%%%%%
\smallskip
\begin{lemma}
\label{Lemma:CardinalityW}
Given a set $\mathcal{Y}$ with $| \mathcal{Y }| =p$.
Let $\mathcal{W}^* = \{ 0, 1, \hdots, p^*-1\}$ be the smallest set for which there 
exists $\beta: \mathcal{Y} \times \mathcal{Y} \rightarrow \mathcal{W}$ 
and $\alpha: \mathcal{Y} \times \mathcal{W} \rightarrow \mathcal{Y} \cup \{ \epsilon \}$
satisfying $\alpha \Big( \tilde{y}, \beta(\tilde{y},y) \Big)=y$ for all $y,\tilde{y} \in \mathcal{Y}$.
We have $p^*=p$.
\end{lemma}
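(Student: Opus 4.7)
The plan is to prove the two inequalities $p^* \leq p$ and $p^* \geq p$ separately, which together give $p^* = p$.

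For the upper bound $p^* \leq p$, I would exhibit an explicit construction with $|\mathcal{W}| = p$. Fix any bijection $\phi: \mathcal{Y} \to \mathcal{W}$ (possible since both sets have cardinality $p$), and define $\beta(\tilde{y}, y) = \phi(y)$ and $\alpha(\tilde{y}, w) = \phi^{-1}(w)$, both independent of $\tilde{y}$. Then $\alpha(\tilde{y}, \beta(\tilde{y}, y)) = \phi^{-1}(\phi(y)) = y$ for all $y, \tilde{y} \in \mathcal{Y}$, so such a pair exists with $|\mathcal{W}| = p$. Hence $p^* \leq p$. This part is routine; the construction is much more economical than the $|\mathcal{W}| = p^2$ used in Proposition 1, essentially because $\beta$ need not encode $\tilde{y}$.

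For the lower bound $p^* \geq p$, I would argue by pigeonhole. Suppose for contradiction that some $\mathcal{W}$ with $|\mathcal{W}| < p$ admits such functions $\beta, \alpha$. Fix an arbitrary $\tilde{y} \in \mathcal{Y}$ and consider the induced map $\beta_{\tilde{y}} : \mathcal{Y} \to \mathcal{W}$ defined by $\beta_{\tilde{y}}(y) = \beta(\tilde{y}, y)$. Since $|\mathcal{Y}| = p > |\mathcal{W}|$, by the pigeonhole principle there exist distinct $y_1, y_2 \in \mathcal{Y}$ with $\beta_{\tilde{y}}(y_1) = \beta_{\tilde{y}}(y_2)$. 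Applying $\alpha(\tilde{y}, \cdot)$ to both sides and using the defining property yields $y_1 = \alpha(\tilde{y}, \beta(\tilde{y}, y_1)) = \alpha(\tilde{y}, \beta(\tilde{y}, y_2)) = y_2$, contradicting $y_1 \neq y_2$. Therefore $|\mathcal{W}| \geq p$, and $p^* \geq p$.

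I do not anticipate a genuine obstacle here. The only subtle point worth emphasizing is that the pigeonhole argument for the lower bound requires $\beta(\tilde{y}, \cdot)$ to be injective for \emph{each fixed} $\tilde{y}$, which forces $|\mathcal{W}| \geq p$ regardless of how cleverly $\beta$ exploits its dependence on $\tilde{y}$. Combining both directions gives $p^* = p$.
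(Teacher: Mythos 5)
Your proof is correct, and both halves are sound: the pigeonhole argument for $p^* \geq p$ is in substance identical to the paper's closing remark that for $p^* < p$ some element of $\mathcal{W}$ would have to appear twice in a row of the table, rendering $\alpha$ ill-defined (both arguments amount to requiring $\beta(\tilde{y},\cdot)$ to be injective for each fixed $\tilde{y}$). Where you genuinely diverge is in the achievability construction. The paper builds $\beta$ as a cyclic Latin square on $\mathcal{Y}\times\mathcal{Y}$ with $\beta(y,y)=0$ on the diagonal, whereas your $\beta(\tilde{y},y)=\phi(y)$ discards the first argument entirely. Your construction is leaner and fully suffices for the lemma as stated, but it loses a property the paper quietly relies on later: in Proposition \ref{Prop:MuHierarchy} and in the completeness theorem of Section \ref{Sec:FurtherProperties}, the step $y(t)=\tilde{y}(t) \Rightarrow w(t)=0 \Rightarrow \mu_{\Delta}(w(t))=0$ uses precisely that $\beta(\tilde{y},y)=0$ whenever $y=\tilde{y}$, together with positive definiteness of $\mu_{\Delta}$. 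Your bijection satisfies $\beta(y,y)=0$ for only one $y\in\mathcal{Y}$, so while it witnesses $p^*=p$, it would not be a valid drop-in replacement for the paper's $\beta$ in the rest of the development. (An easy fix is to keep minimality and the diagonal property simultaneously, e.g.\ exactly the paper's table.) This is worth flagging, but it is a remark about downstream use, not a gap in your proof of the lemma itself.
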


\begin{proof}
\begin{table}
\centering
  \begin{tabular}{ l || c  c c c c c}
 & $y_1$ & $y_2$ & $y_3$ & $y_4$ & $\hdots$ &$y_p$ \\
    \hline \hline 
$y_1$ & $0$ & $p-1$ & $p-2$ & $p-3$ & $\hdots$ & $1$ \\ 
$y_2$ & $1$ & $0$ & $p-1$ & $p-2$ & $\hdots$ &$2$ \\ 
$y_3$ & $2$ & $1$ & $0$ & $p-1$ & $\hdots$ & $3$ \\ 
$y_4$ & $3$ & $2$ & $1$ & $0$ & & \\ 
$\vdots$ & $\vdots$ & $\vdots$ & $\vdots$& & $\ddots$ & \\ 
$y_p$ & $p-1$ & $p-2$ & $p-3$ & & & $0$
  \end{tabular}
  \caption{Definition of $\beta: \mathcal{Y} \times \mathcal{Y} \rightarrow \mathcal{W}$ when $|\mathcal{Y}|=p$}
  \label{Table:BetaDefinition}
\end{table}

Let $p^*=p$, and consider a map $\beta: \mathcal{Y} \times \mathcal{Y} \rightarrow \mathcal{W}$
defined as shown in the Table \ref{Table:BetaDefinition}, 
to be read as $\beta(y_1,y_1)=0$, $\beta(y_2,y_1)=1$, $\beta(y_1,y_2)=p-1$
and so on.
Note that by construction, each element of $\mathcal{W}$ appears {\it exactly once} in every row of the table.
Now consider function $\alpha: \mathcal{Y} \times \mathcal{W} \rightarrow \mathcal{Y}$
defined by
\begin{displaymath}
\alpha(\tilde{y},w) = y \textrm{     where   } w=\beta(\tilde{y},y).
\end{displaymath}
$\alpha$ is a well-defined function,
and it is straightforward to show that 
$\alpha \Big( \tilde{y}, \beta(\tilde{y},y) \Big)=y$ for all $y,\tilde{y} \in \mathcal{Y}$.

Finally, note that when $p^* <p$, 
some element of $\mathcal{W}$ would have to appear twice in each row of the table.
Equivalently, for every $\tilde{y} \in \mathcal{Y}$, there exists $y_1 \neq y_2 \in \mathcal{Y}$
such that $\beta(\tilde{y},y_1) = \beta(\tilde{y},y_2)$.
Now suppose there exists a function $\alpha$ 
such that $\alpha \Big( \tilde{y}, \beta(\tilde{y},y) \Big)=y$ for all $y,\tilde{y} \in \mathcal{Y}$.
We then have
\begin{displaymath}
y_1 = \alpha(\tilde{y},\beta(\tilde{y},y_1)) = \alpha(\tilde{y},\beta(\tilde{y},y_2)) = y_2,
\end{displaymath} 
leading to a contradiction.  
\end{proof}
\smallskip

Note that $\alpha(\mathcal{Y} \times \mathcal{W})= \mathcal{Y}$ in the construction 
presented in the proof of Lemma \ref{Lemma:CardinalityW}.
We can thus drop $\{ \epsilon \}$ from the co-domain of $\alpha$.

%%%%%%%%%%%%%%
\subsection{Ensuring existence of $\psi_i$}
\label{SSec:ExistenceOfMap}

We now turn out attention to proving that, under one additional assumption on $M_i$,
the structure proposed in Section \ref{SSec:ChoiceW}
and shown in Figure \ref{Fig:Structure} ensures that condition a) of Definition \ref{Def:DFMApproximation} is met:

%%%%%
\smallskip
\begin{lemma}
\label{Lemma:ConditionA}
Consider the system shown in Figure \ref{Fig:Structure},
where $P \subset \mathcal{U}^{\mathbf{Z}_+} \times (\mathcal{Y} \times \mathcal{V})^{\mathbf{Z}_+}$, 
$\mathcal{U}$ and $\mathcal{Y}$ are finite, 
and $\beta:\mathcal{Y} \times \mathcal{Y} \rightarrow \mathcal{W}$
and $\alpha: \mathcal{Y} \times \mathcal{W} \rightarrow \mathcal{Y}$ 
are given functions that satisfy (\ref{Eq:AlphaBetaCondition}). 
For any DFM
$M_i \subset (\mathcal{U} \times \mathcal{Y})^{\mathbb{Z}+} \times (\mathcal{Y \times \hat{\mathcal{V}}}_i)^{\mathbb{Z}_+} $ 
that is $\mathcal{Y} / \mathcal{Y}$ strictly proper and has fixed initial condition, 
there exists a $\psi_i : P \rightarrow \hat{P}_i$, 
where $\hat{P}_i$ is the interconnection of $\hat{M}_i$ and $\Delta_i$, 
such that $\psi_i$ is surjective and
$ \psi_i \Big( P|_{\mathbf{u_o},\mathbf{y_o}} \Big) \subseteq\hat{P}_i|_{\mathbf{u_o},\mathbf{y_o}}$. 
\end{lemma}

\begin{proof}
The proof is by construction.
We begin by noting that condition (\ref{Eq:AlphaBetaCondition})
ensures that the output $\mathbf{\hat{y}} \in \mathcal{Y}^{\mathbb{Z}_+}$ of $\hat{P}_i$ matches
the output $\mathbf{y} \in \mathcal{Y}^{\mathbb{Z}_+}$ of $P$ for every choice of 
$\mathbf{u} \in \mathcal{U}^{\mathbb{Z}_+}$.
Now consider $\psi_{1,i} : P \rightarrow M_i$ defined by
$\psi_{1,i}\Big( (\mathbf{u},(\mathbf{y},\mathbf{v}) \Big) = ((\mathbf{u},\mathbf{y}),(\tilde{\mathbf{y}},\hat{\mathbf{v}})) \in M_i$, 
where $(\tilde{\mathbf{y}},\hat{\mathbf{v}})$ is the unique output response of $M_i$ to input $(\mathbf{u},\mathbf{y})$ 
for fixed initial condition $q_o$. 
Also consider $\psi_{2,i} : \psi_{1,i} (P) \rightarrow \hat{P}_i$ defined by:
\begin{displaymath}
\psi_{2,i} \Big( ((\mathbf{u},\mathbf{y}),(\tilde{\mathbf{y}},\hat{\mathbf{v}})) \Big) = (\mathbf{u},(\mathbf{y},\hat{\mathbf{v}}))
\end{displaymath}
This map is well-defined and its image lies in $\hat{P}_i$ by virtue of the structure considered.
Let $\psi_i= \psi_{2,i} \circ \psi_{1,i}$. 
Note that $\psi_i$ is surjective since $\psi_{2,i}$ is surjective and $\psi_{2,i}^{-1}(\hat{P}_i) = \psi_{1}(P)$ 
by definition. 
Moreover, 
$\psi_i(P|_{\mathbf{u_o},\mathbf{y_o}}) \subseteq \hat{P}_i|_{\mathbf{u_o},\mathbf{y_o}}$ since 
\begin{displaymath}
\psi_i(P|_{\mathbf{u_o},\mathbf{y_o}}) 
= \psi_{2,i}\Big( \psi_{1,i}(P|_{\mathbf{u_o},\mathbf{y_o}}) \Big) 
=  \psi_{2,i}\Big( ((\mathbf{u_o},\mathbf{y_o}),(\mathbf{\tilde{y}}, \mathbf{\hat{v}})) \in M_i \Big) 
\subseteq \hat{P}_i|_{\mathbf{u_o},\mathbf{y_o}}
\end{displaymath}
which concludes our proof.
\qedsymbol
\end{proof}
\smallskip

It follows from Lemma \ref{Lemma:ConditionA} that by restricting ourselves to 
approximations $\{ \hat{M}_i \}_{i=1}^{\infty}$ with the structure shown in Figure \ref{Fig:Structure},
where $M_i$ (for each $i\in \mathbb{Z}_+$) 
is a $\mathcal{Y}/\mathcal{Y}$ strictly proper DFM with {\it fixed} initial condition,
but otherwise arbitrary structure,
property a) of Definition \ref{Def:DFMApproximation} is guaranteed by construction,
and we only need worry about constructing $\{ M_i \}_{i=1}^{\infty}$ to satisfy properties $(b)$ and $(c)$.

%%%%%%%%%%%%%%%%%%%%%%%%%%%%%%%%%%%%%%%%%%%%%%%%%%%%%%%%%
%%%%%%%%%%%%%%%%%%%%%%%%%%%%%%%%%%%%%%%%%%%%%%%%%%%%%%%%%
%%%%%%%%%%%%%%%%%%%%%%%%%%%%%%%%%%%%%%%%%%%%%%%%%%%%%%%%%
\section{Construction of $M_i$}
\label{Sec:Construction}

What remains is to construct a sequence of DFM $\{M_i\}_{i=1}^{\infty}$ that, 
when used in conjunction with the generalized structure proposed in Section \ref{SSec:ChoiceW} and 
shown in Figure \ref{Fig:Structure},
ensures that properties b) and c) of Definition \ref{Def:DFMApproximation} are satisfied.
We begin by giving the intuition behind this construction in Section \ref{SSec:ConstructionInspiration},
before presenting the details of the construction in Section \ref{SSec:ConstructionDetails}.

%%%%%%%%%%%%%%%%%%%%%%%%%%%%%
\subsection{Inspiration for the Construction}
\label{SSec:ConstructionInspiration}

The inspiration for the construction comes from linear systems theory.
Indeed, consider a discrete-time SISO LTI system $S$ described by
\begin{eqnarray*}
\label{Eq:LTIDynamics}
x(t+1) & = & A x(t) + B u(t) \\
y(t) & = & C x(t) + D u(t) 
\end{eqnarray*}
where $t \in \mathbb{Z}_+$, $x(t) \in \mathbb{R}^n$, $u(t) \in \mathbb{R}$, $y(t) \in \mathbb{R}$, 
$A$, $B$ and $C$ are given matrices of appropriate dimensions,
and $D$ is a given scalar.
Assume that the pair $(C,A)$ is observable and the pair $(A,B)$ is reachable.
Under these conditions, following a fairly classical derivation that is omitted here for brevity,
we can express the state of the system at the current
time in terms of its past $n$ inputs and outputs as
\begin{equation}
\label{Eq:RelatingStates}
x(t) = \left[ \begin{array}{cc} A^n O^{-1} & R - A^n O^{-1} M \end{array} \right] 
\left[ \begin{array}{c} 
y(t-1) \\ \vdots \\ y(t-n) \\ u(t-1) \\ \vdots \\ u(t-n)
\end{array} \right],
\end{equation}
where $R= [B \; AB \; \hdots \; A^{n-1}B]$ is the reachability matrix,
\begin{displaymath}
O= \left[ \begin{array}{c}
CA^{n-1} \\
CA^{n-2} \\
\vdots \\
C
\end{array} \right]
\end{displaymath}
is a row permutation of the observability matrix,
and $M$ is the matrix of Markov parameters
\begin{displaymath}
M= \left[ \begin{array}{cccc}
D & CB & \hdots  & CA^{n-2}B \\
0  & \ddots & \ddots & \vdots \\
\vdots & \ddots & \ddots & CB \\
0 & \hdots &  0 & D
\end{array} \right].
\end{displaymath}

This observation suggests an approach for constructing a sequence of approximate models 
of $S$ starting from finite length input-output sequence pairs of $S$:
The states of the $i^{th}$ approximate model, $\hat{S}_i$, 
are then those subsets of $\mathbb{R}^{2i}$ that constitute 
{\it feasible} snapshots of length $i$ of the input-output behavior of $S$.
Equivalently, each state of $\hat{S}_i$ corresponds to a subset 
of states of $S$, consisting of those states that are un-falsified by the observed data
of length $i$. 

In particular, when $i= n$, consider the approximate model $\hat{S}_n$ with state $\hat{x}(t)$ defined as 
\begin{displaymath}
\hat{x}(t) = [y(t-1), \hdots, y(t-n), u(t-1),\hdots,u(t-n)]'
\end{displaymath}
and state-space description
\begin{eqnarray*}
\hat{x}(t+1)  & = & \hat{A} \hat{x}(t) + \hat{B} u(t) \\
\hat{y}(t) & = & \hat{C} \hat{x}(t) + D u(t)
\end{eqnarray*}
where $\hat{C} = \left[ \begin{array}{cc} CA^n O^{-1} & CR-CA^n O^{-1} M \end{array} \right]$
and $\hat{A}$ and $\hat{B}$ are appropriately defined\footnote{The exact expression for $\hat{A}$ and 
$\hat{B}$ is not relevant to the discussion, 
and is thus omitted for brevity.} matrices.
We note the following:
\begin{enumerate}

\item If systems $S$ and $\hat{S}_n$ are identically initialized, 
meaning that their initial states obey
\begin{displaymath}
x(0) = \left[ \begin{array}{cc} A^n O^{-1} & R - A^n O^{-1} M \end{array} \right]  \hat{x}(0)
\end{displaymath}
their outputs will be identical for any choice of input $\mathbf{u} \in \mathbb{R}^{\mathbb{Z}_+}$.
In that sense, $\hat{S}_n$ can be considered to recover the original system $S$.

\item Every state of $\hat{S}_n$ corresponds to a single state of $S$.
The converse is not true. 
Indeed, there does not exist a one-to-one correspondence between the states of $S$ and $\hat{S}_n$:
The kernel of matrix $\left[ \begin{array}{cc} A^n O^{-1} & R - A^n O^{-1} M \end{array} \right]$ 
in (\ref{Eq:RelatingStates}) has non-zero dimension,
and one state of $S$ can correspond to several states of $\hat{S}_n$.
$\hat{S}_n$ is thus an inherently redundant model.

\end{enumerate}

An alternative approach for comparing the responses of $S$ and $\hat{S}_n$ 
without explicitly matching their initial states
is by considering an ``approximation error" $\Delta_i$ with the structure shown in Figure \ref{Fig:Structure}
($P$ then corresponds to ``$S$" and $M_i$ corresponds to ``$\hat{S}_n$").
In this setup, $\hat{S}_n$ is additionally given access to the outputs of $S$,
allowing it to estimate its initial state:
State $\hat{x}(t)$ of $\hat{S}_n$ can thus be thought of as its the best instantaneous estimate of the
state $x(t)$ of $S$. 
At time steps $t \leq n-1$, the state set of $\hat{S}$ is refined as follows
\begin{displaymath}
\hat{x}(0) \in \mathbb{R}^{2n}, 
\end{displaymath}
\begin{displaymath}
\hat{x}(1) \in \{v \in \mathbb{R}^{2n} |v(1)=y(0),  v_{n+1} = u(0) \}, 
\end{displaymath}
\begin{displaymath}
\hat{x}(2) \in \{ v \in \mathbb{R}^{2n} |v(1)=y(1), v(2)=y(0),v_{n+1} = u(1), v_{n+2} = u(0) \} 
\end{displaymath}
and so on. 
At time steps $t \geq n$, $\hat{x}(t)$ is uniquely defined by the expression in (\ref{Eq:RelatingStates}).
The $\mathcal{L}_2$ gain of $\Delta_i$,
defined here as the infimum of $\gamma \geq 0$ such that the inequality
\begin{displaymath}
\inf_{T \geq 0} \sum_{t=0}^{T} \gamma^2 \| u(t) \||^2 - \| w(t) \|^2 > -\infty
\end{displaymath}
holds, compares how well the outputs match after a transient
(i.e. after $\hat{S}_n$ is done estimating the initial state of $S$):
Since the outputs of $S$ and $\hat{S}_n$ will exactly match for all times $t \geq n$,
the $\mathcal{L}_2$ gain of $\Delta_i$ in this case is zero.

The internal structure of $\Delta_i$
thus has a nice intuitive interpretation that may not have been
as transparent to the readers when we introduced it in \cite{CHAPTER:TaMeDa2007},
and the problem of finite state approximation is thus intricately connected to that of 
state estimation and reconstruction under finite memory constraints.
Note that output $\hat{y}(t)$ cannot explicitly depend on input $y(t)$ in this setting, 
otherwise $\hat{S}_n$ can trivially match the output of $S$ at every time step, 
rendering the comparison meaningless.

While the use of $\hat{S}_n$ as an alternative model of $S$  
is not justifiable here,
this exercise suggests a procedure for constructing
approximations of systems over finite alphabets:
In that setting, $\mathcal{U}$ and $\mathcal{Y}$ are finite
leading to approximate models with {\it finite} state-spaces.

%%%%%%%%%%%%%%%%%%%%%%%%%%%
\subsection{Details of the Construction}
\label{SSec:ConstructionDetails}

Given a plant over finite alphabets as in (\ref{eq:Plant}) and a performance objective 
as in (\ref{eq:ObjectiveP}), we construct the corresponding sequence $\{M_i\}_{i=1}^{\infty}$ as follows:
For each $i \in \mathbb{Z}_+$, $M_i$ is a $\mathcal{Y} / \mathcal{Y}$ strictly proper DFM described by
\begin{eqnarray}
\nonumber
q(t+1) & = & f_i(q(t),u(t),y(t)) \\
\tilde{y}(t) & = & g_i(q(t)) \\
\nonumber 
\hat{v}_i(t) & = & h_i(q(t)) 
\end{eqnarray}
where $t \in \mathbb{Z}_+$,
$q(t) \in \mathcal{Q}_i$, $u(t) \in \mathcal{U}$,
$y(t) \in \mathcal{Y}$, $\tilde{y}(t) \in \mathcal{Y}$,
and $\hat{v}_i(t) \in \hat{\mathcal{V}}_i$.

%%%%% State Set
\bigskip
\noindent {\bf State Set:} The state set is
\begin{displaymath}
\mathcal{Q}_i = \mathcal{Q}_{i, F} \cup \mathcal{Q}_{i, I} \cup \{q_{\emptyset}, q_{o}\}
\end{displaymath}
where 

\noindent $\mathcal{Q}_{i, F}$ - Set of final states. This is where the state of $M_i$ evolves
for $t \geq i$. 

\noindent $\mathcal{Q}_{i, I}$ - Set of initial states. This is where the state of $M_i$ evolves
for $1 \leq t < i$. 

\noindent $q_{\emptyset}$ - Impossible state. This is where the state of $M_i$ transitions to
when it encounters an input-output pair that does not correspond to plant $P$.

\noindent $q_{o}$ - Initial state. This is the fixed initial state of $M_i$ at $t=0$.

More precisely, using the shorthand notation $f_{u}(x)$ to denote $f(x,u)$, we have

\bigskip
\noindent $\rhd$
$\mathcal{Q}_{i,F} \subset \mathcal{Y}^i \times \mathcal{U}^i$, 
$q=(y_1,\hdots,y_i,u_1,\hdots,u_i) \in \mathcal{Q}_{i,F}$ if $\exists x_o \in \mathbb{R}^n$ such that
\begin{eqnarray}
\nonumber
y_i & = & g\Big( x_o \Big) \\ \nonumber
y_{i-1} & = & g\Big( f_{u_i}(x_o) \Big) \\ \label{Eq:FeasibleF}
y_{i-2} & = & g \Big( f_{u_{i-1}} \circ f_{u_i}(x_o)\Big) \\ \nonumber
\vdots & = & \vdots \\ \nonumber
y_1 & = & g \Big( f_{u_2} \circ \cdots \circ f_{u_i}(x_o) \Big) 
\end{eqnarray}

\noindent $\rhd$
$\mathcal{Q}_{i,I} = \mathcal{Q}_{i,I,1} \cup \hdots \cup \mathcal{Q}_{i,I,i}$
where 
$\mathcal{Q}_{i,I,j} \subset \mathcal{Y}^j \times \mathcal{U}^j$
and
$q=(y_1,\hdots,y_j,u_1,\hdots,u_j) \in \mathcal{Q}_{i,I,j}$ if $\exists x_o \in \mathbb{R}^n$ such that
\begin{eqnarray}
\nonumber
y_j & = & g\Big( x_o \Big) \\ \nonumber
y_{j-1} & = & g\Big( f_{u_j}(x_o) \Big) \\ \label{Eq:FeasibleI}
\vdots & = & \vdots \\ \nonumber
y_1 & = & g \Big( f_{u_2} \circ \cdots \circ f_{u_j}(x_o) \Big) 
\end{eqnarray}

\noindent 

%%%%% Transition Function
\bigskip
\noindent {\bf Transition Function:} The transition function 
$f_i: \mathcal{Q}_i \times \mathcal{U} \times \mathcal{Y} \rightarrow \mathcal{Q}_i$ 
is defined as follows:

\bigskip
\noindent $\rhd$ For $q=(y_1,\hdots,y_i,u_1,\hdots,u_i) \in \mathcal{Q}_{i,F}$, we define
\begin{displaymath}
f_i(q,u,y)= 
\left\{ \begin{array}{cc}
\overline{q} = (y,y_1,\hdots,y_{i-1}, u, u_1,\hdots, u_{i-1}) & \textrm{      if   } \overline{q} \in \mathcal{Q}_{i,F} \\
q_{\emptyset} & \textrm{      otherwise}
\end{array} \right.
\end{displaymath}

\noindent $\rhd$ For $q=q_o$, we define
\begin{displaymath}
f_i(q_o,u,y)= 
\left\{ \begin{array}{cc}
\overline{q} = (y, u) & \textrm{      if   } \overline{q} \in \mathcal{Q}_{i,I,1} \\
q_{\emptyset} & \textrm{      otherwise}
\end{array} \right.
\end{displaymath}

\noindent $\rhd$ For $q=(y_1,\hdots,y_j,u_1,\hdots,u_j) \in \mathcal{Q}_{i,I,j}$, we define
\begin{displaymath}
f_i(q,u,y)= 
\left\{ \begin{array}{cc}
\overline{q} = (y,y_1,\hdots,y_j, u, u_1,\hdots, u_j) & \textrm{      if   } 
\overline{q} \in \mathcal{Q}_{i,I,j+1} \cup \mathcal{Q}_{i,F} \\
q_{\emptyset} & \textrm{      otherwise}
\end{array} \right.
\end{displaymath}

\noindent $\rhd$ For $q_{\emptyset}$, we define $f_i(q_{\emptyset},u,y)=q_{\emptyset}$
for all $u \in \mathcal{U}$ and $y \in \mathcal{Y}$.

%%%%% Output Functions
\bigskip
\noindent {\bf Output Functions:} We begin by associating with every $q \in \mathcal{Q}_i$
a subset $X(q)$ of $\mathbb{R}^n$ defined as follows:

\bigskip
\noindent $\rhd$ For $q=(y_1,\hdots,y_i,u_1,\hdots,u_i) \in \mathcal{Q}_{i,F}$, let
\begin{equation}
X_o= \{\ x_o \in \mathbb{R}^n | x_o \textrm{     satisfies     } (\ref{Eq:FeasibleF}) \}
\end{equation}
and define
\begin{equation}
X(q) = f_{u_1} \circ \hdots \circ f_{u_i}(X_o)
\end{equation}

\noindent $\rhd$ For $q=(y_1,\hdots,y_j,u_1,\hdots,u_j) \in \mathcal{Q}_{i,I,j}$, let
\begin{equation}
X_o= \{\ x_o \in \mathbb{R}^n | x_o \textrm{     satisfies     } (\ref{Eq:FeasibleI}) \}
\end{equation}
and define
\begin{equation}
X(q) = f_{u_1} \circ \hdots \circ f_{u_j}(X_o)
\end{equation}

\noindent $\rhd$ Define
\begin{equation}
X(q) =
\left\{ \begin{array}{cc}
\mathbb{R}^n, & \; q=q_o \\
\emptyset, & \; q = q_{\emptyset}
\end{array} \right.
\end{equation}

We can also associate with every $q \in \mathcal{Q}_i$ a subset $Y(q)$ of $\mathcal{Y}$ defined as
\begin{equation}
\label{Eq:Ydefinition}
Y(q) = g(X(q))
\end{equation}

We are now ready to define the output function $g_i: \mathcal{Q}_i \rightarrow \mathcal{Y}$ as
\begin{equation}
g_i(q) = 
\left\{ \begin{array}{ll}
y \textrm{   for some   } y \in \mathcal{Y}, & \: \: \: \textrm{if   } Y(q) =\emptyset \\
y \textrm{   for some   } y \in Y(q), & \: \: \: \textrm{otherwise} \
\end{array} \right.
\end{equation}

The output function $h_i: \mathcal{Q}_i \rightarrow \hat{\mathcal{V}}_i$ is defined as
\begin{equation}
\label{Eq:vOutputFunction}
h_i(q) = 
\left\{ \begin{array}{ll}
h \Big( \displaystyle \argmax_{x \in X(q)} \mu(h(x)) \Big), 
& \:\:\: q \in \mathcal{Q}_{i,F} \cup \mathcal{Q}_{i,I} \cup \{q_o\} \\
\displaystyle h \Big( \argmin_{x \in \mathbb{R}^n} \mu(h(x)) \Big), 
& \:\:\: q=q_{\emptyset}
\end{array} \right.
\end{equation}

%%%%% Output Set
\bigskip 
\noindent {\bf Output Set:} The output set $\hat{\mathcal{V}}_i$ is defined as
\begin{displaymath}
\hat{\mathcal{V}}_i = \bigcup_{q \in \mathcal{Q}_i} h_i(q)
\end{displaymath}

\begin{remark}
We conclude this section with a few observations:
\begin{enumerate}
\item The output of $M_i$ corresponding to a state $q$ is chosen {\it arbitrarily} among the
feasible options. The possibility of error is accounted for in the gain $\gamma_i$ of $\Delta_i$.
\item Our definition of the performance output function $h_i$ assumes that 
the map $\mu: \mathbb{R} \rightarrow \mathbb{R}$ has a well-defined minimum and maximum.
This places some mild restrictions on the original problem.
\end{enumerate}
\end{remark}

\begin{remark}
When $|\mathcal{U}| =m$ and $|\mathcal{Y}|=p$, 
the cardinality of the state set $\mathcal{Q}_i$ of $M_i$ satisfies
\begin{displaymath}
m^i \leq | \mathcal{Q}_i | \leq m^i p^i.
\end{displaymath}
The bounds follow from the fact that every input sequence of length $i$ is feasible,
and for each input sequence, 
the corresponding number of feasible output sequences of length $i$ can range from 1 to $p^i$.
For each state $q_i \in \mathcal{Q}_i$,
there is at least 1 and at most $p \cdot m$ possible state transitions.
\end{remark}

%%%%%%%%%%%%%%%%%%%%%%%%%%%%%%%%%%%%%%%%%%%%%%%%%%%%
%%%%%%%%%%%%%%%%%%%%%%%%%%%%%%%%%%%%%%%%%%%%%%%%%%%%
%%%%%%%%%%%%%%%%%%%%%%%%%%%%%%%%%%%%%%%%%%%%%%%%%%%%
\section{$\rho / \mu$ Approximation Properties of the Construction}
\label{Sec:Properties}

In this Section, we show that the construction of $\{M_i\}_{i=1}^{\infty}$ proposed
in Section \ref{SSec:ConstructionDetails} together with the generalized structure 
proposed and analyzed in Section \ref{Sec:SpecialStructure} indeed  allows us to meet 
the remaining two properties of Definition \ref{Def:DFMApproximation},
namely properties b) and c).

%%%%%%%%
\subsection{Conditions on the Performance Objectives}
\label{SSec:PropertyB}

   \begin{figure*}[thpb]
       \centering
       \includegraphics[scale=0.35]{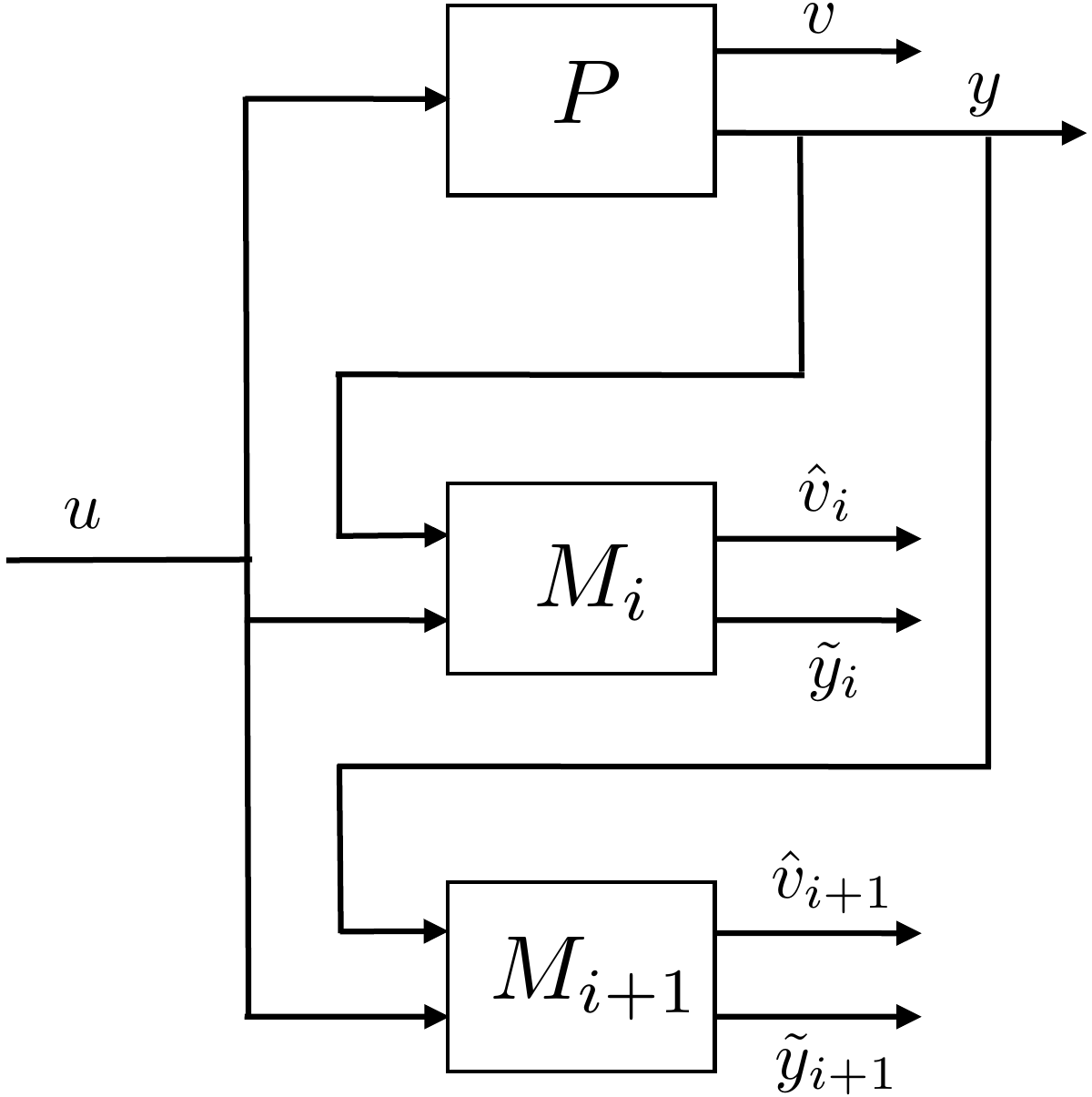}
       \caption{Interconnection of $P$, $M_i$ and $M_{i+1}$}
       \label{Fig:PMiInterconnection}
   \end{figure*}

%%%%
\smallskip
\begin{prop}
\label{Prop:Xinclusion}
Consider a plant $P$ as in (\ref{eq:Plant}), 
a performance objective as in (\ref{eq:ObjectiveP}),
and a DFM $M_i$ constructed following the procedure given in 
Section \ref{SSec:ConstructionDetails} for some $i \geq 1$.
Consider the interconnection of $P$ and $M_i$ as shown in Figure \ref{Fig:PMiInterconnection}.
Let $x(t)$ and $q_i(t)$ be the states of $P$ and $M_i$,
respectively, at time $t$.
For any choice of $\mathbf{u} \in \mathcal{U}^{\mathbb{Z}_+}$ and $x(0) \in \mathbb{R}^n$,
we have
\begin{displaymath}
x(t) \in X(q_i(t)), \textrm{   for all   } t \geq 0.
\end{displaymath}
\end{prop}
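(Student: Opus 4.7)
The plan is to argue by induction on $t$, showing simultaneously that $q(t) \neq q_\emptyset$ and that $x(t) \in X(q(t))$. The base case is immediate: by definition of the fixed initial state, $q(0) = q_o$ and $X(q_o) = \mathbb{R}^n$, so $x(0) \in X(q(0))$ trivially.

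For the inductive step, assume $x(t) \in X(q(t))$ and split into cases according to where $q(t)$ lives in $\mathcal{Q}_i = \mathcal{Q}_{i,F} \cup \mathcal{Q}_{i,I} \cup \{q_\emptyset, q_o\}$. The hypothesis rules out $q(t) = q_\emptyset$ since $X(q_\emptyset) = \emptyset$. In each remaining case, I would write $q(t) = (y_1,\dots,y_k,u_1,\dots,u_k)$ (with $k = 0$ for $q_o$, $k = j$ for $\mathcal{Q}_{i,I,j}$, and $k = i$ for $\mathcal{Q}_{i,F}$), and use the induction hypothesis to obtain $x_o \in X_o$ satisfying the feasibility equations (\ref{Eq:FeasibleI}) or (\ref{Eq:FeasibleF}), with $x(t) = f_{u_1} \circ \cdots \circ f_{u_k}(x_o)$. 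The candidate next state, by the construction of $f_i$, is $\overline{q} = (y(t),y_1,\dots,y_{k'},u(t),u_1,\dots,u_{k'})$ where $k' = k$ in the shift case ($q(t) \in \mathcal{Q}_{i,F}$, so the oldest pair is dropped) and $k' = k+1$ in the growing case. In each case the key move is to exhibit a witness $x_o''$ verifying the feasibility equations for $\overline{q}$, which simultaneously shows $\overline{q} \neq q_\emptyset$ (hence $q(t+1) = \overline{q}$) and places $x(t+1)$ inside $X(\overline{q})$.

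The witness choice is dictated by the case. In the growing case (including the $q_o$ case), take $x_o'' = x_o$: the first $k$ feasibility equations for $\overline{q}$ coincide with those for $q(t)$, and the new equation $y(t) = g(f_{u_1} \circ \cdots \circ f_{u_k}(x_o''))$ holds because $y(t) = g(x(t))$ by definition of the plant. In the shift case ($q(t) \in \mathcal{Q}_{i,F}$), take $x_o'' = f_{u_i}(x_o)$: the first $i-1$ feasibility equations for $\overline{q}$ follow by re-indexing the corresponding equations for $q(t)$, and the new equation at the top uses $y(t) = g(x(t)) = g(f_{u_1} \circ \cdots \circ f_{u_i}(x_o)) = g(f_{u_1} \circ \cdots \circ f_{u_{i-1}}(x_o''))$. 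In both cases, applying $f_{u(t)} \circ f_{u_1} \circ \cdots \circ f_{u_{k'}}$ to $x_o''$ yields exactly $f_{u(t)}(x(t)) = x(t+1)$, so $x(t+1) \in X(\overline{q}) = X(q(t+1))$.

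The main obstacle is purely notational: keeping the two orderings straight (time index of the plant vs.\ position in the state tuple of $M_i$, with position $1$ being the most recent) and verifying that the shift operation in the $\mathcal{Q}_{i,F}$ case is absorbed by pushing $x_o$ one step of dynamics forward to $f_{u_i}(x_o)$. Once the witness is identified, the verification of each feasibility equation is a one-line application of the plant dynamics $x(t+1) = f(x(t),u(t))$ and $y(t) = g(x(t))$.
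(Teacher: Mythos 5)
Your proof is correct, and it rests on the same essential mechanism as the paper's: exhibiting a witness $x_o$ for the feasibility equations (\ref{Eq:FeasibleI}) or (\ref{Eq:FeasibleF}) that define $X(q(t))$. The packaging differs, though. The paper argues directly: for $1 \leq t < i$ it takes the witness to be the true initial state $x(0)$, and for $t \geq i$ the true past state $x(t-i)$, then reads off $x(t) = f_{u(t-1)} \circ \cdots \circ f_{u(t-i)}(x(t-i)) \in X(q(t))$ in one line. You instead induct on $t$ and carry forward an arbitrary witness, updating it to $x_o'' = f_{u_i}(x_o)$ in the shift case. Your route is slightly longer but buys one thing the paper leaves implicit: the paper's phrase ``we can write $X(q(t)) = \ldots$'' silently assumes that $q(t)$ is the tuple of actual past input--output pairs, i.e.\ that the transition never falls into $q_{\emptyset}$; your induction proves this simultaneously with the inclusion, since the witness $x_o''$ certifies that the candidate next state $\overline{q}$ is feasible and hence is the state actually reached. (The paper defers this point to the proof of Proposition \ref{Prop:vInequality}, where it is asserted that $q(t) \neq q_{\emptyset}$ because $M_i$ is driven by a feasible pair of $P$.) Both arguments are valid; the paper's choice of the true plant history as the global witness is the cleaner of the two, while yours is the more self-contained.
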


\begin{proof}
Pick a choice $\mathbf{u} \in \mathcal{U}^{\mathbb{Z}_+}$ and $x(0) \in \mathbb{R}^n$.
At $t=0$, $q(0)=q_o$ and $X(q_o)=\mathbb{R}^n$ by construction.
Thus $x(0) \in X(q_i(0))$.
For $1 \leq t < i$, we can write
\begin{displaymath}
X(q_i(t)) = \Big\{ x \in \mathbb{R}^n  | x = f_{u(t-1)} \circ \hdots \circ f_{u(0)} (x_o) \textrm{        for some    } x_o \in \mathbb{R}^n \textrm{     that satisfies     } (\ref{Eq:FeasibleI}) \Big\}.
\end{displaymath}
Thus $x(t) \in X(q_i(t))$ since it can indeed be written in that form for some $x_o$, 
namely the initial state of $P$, $x_o=x(0)$,
and $x_o$ satisfies (\ref{Eq:FeasibleI}).
For $t \geq i$, we can write
\begin{displaymath}
X(q_i(t)) = \Big\{ x \in \mathbb{R}^n  | x = f_{u(t-1)} \circ \hdots \circ f_{u(t-i)} (x_o) \textrm{        for some    } x_o \in \mathbb{R}^n \textrm{     that satisfies     } (\ref{Eq:FeasibleF}) \Big\}.
\end{displaymath}
Again we have $x(t) \in X(q_i(t))$, since $x(t)$ can be written as $x(t) = f_{u(t-1)} \circ \hdots \circ f_{u(t-i)} (x(t-i))$,
and $x(t-i)$ satisfies (\ref{Eq:FeasibleF}).
Finally, we note that our argument is independent of the specific choice of $\mathbf{u} \in \mathcal{U}^{\mathbb{Z}_+}$,
and is also independent of the initial state of $P$,
which concludes our proof.
\end{proof}
\smallskip

%%%%
\smallskip
\begin{prop}
\label{Prop:vInequality}
Consider a plant $P$ as in (\ref{eq:Plant}), 
a performance objective as in (\ref{eq:ObjectiveP}),
and a DFM $M_i$ constructed following the procedure given in 
Section \ref{SSec:ConstructionDetails} for some $i \geq 1$.
Consider the interconnection of $P$ and $M_i$ as shown in Figure \ref{Fig:PMiInterconnection}.
For any choice of $\mathbf{u} \in \mathcal{U}^{\mathbb{Z}_+}$ and $x(0) \in \mathbb{R}^n$,
we have
\begin{displaymath}
\mu(v(t)) \leq \mu(\hat{v}_i(t)), \textrm{   for all   } t \geq 0.
\end{displaymath}
\end{prop}

\begin{proof}
Pick a choice $\mathbf{u} \in \mathcal{U}^{\mathbb{Z}_+}$ and $x(0) \in \mathbb{R}^n$.
It follows from Proposition \ref{Prop:Xinclusion} that the corresponding state trajectories of
$P$ and $M_i$ satisfy $x(t) \in X(q_i(t))$, for all $t \in \mathbb{Z}_+$.
We have $q_i(t) \neq q_{\emptyset}$ for all $t$,
since $M_i$ is driven by a {\it feasible} pair $(\mathbf{u},\mathbf{y})$ of $P$
in this setup.
Let $\displaystyle \overline{x}_i(t) = \argmax_{x \in X(q_i(t))} \mu(h(x(t)))$.
It follows from (\ref{Eq:vOutputFunction}) that
\begin{equation*}
\mu(v(t)) = \mu(h(x(t)) \leq \mu \Big(h(x_i(t)) \Big) = \mu (\hat{v}_i (t)).
\end{equation*}
Once again, noting that our argument is independent of the specific choice of 
$\mathbf{u} \in \mathcal{U}^{\mathbb{Z}_+}$,
and of the initial state of $P$,
we conclude our proof.
\end{proof}
\smallskip

%%%%
\smallskip
\begin{prop}
\label{Prop:vInequalityPlus}
Consider a plant $P$ as in (\ref{eq:Plant}), 
a performance objective as in (\ref{eq:ObjectiveP}),
two DFM $M_i$ and $M_{i+1}$ constructed following the procedure given in 
Section \ref{SSec:ConstructionDetails} for some $i \geq 1$.
Consider the interconnection of $P$, $M_i$ and $M_{i+1}$ as shown in Figure \ref{Fig:PMiInterconnection}.
For any choice of $\mathbf{u} \in \mathcal{U}^{\mathbb{Z}_+}$ and $x(0) \in \mathbb{R}^n$,
we have 
\begin{displaymath}
\mu(\hat{v}_{i+1}(t)) \leq \mu(\hat{v}_i(t)), \textrm{   for all   } t \geq 0.
\end{displaymath}
\end{prop}

\begin{proof}
Pick a choice $\mathbf{u} \in \mathcal{U}^{\mathbb{Z}_+}$ and $x(0) \in \mathbb{R}^n$.
Let $q_i(t)$ and $q_{i+1}(t)$ denote the states of $M_i$ and $M_{i+1}$,
respectively, at time $t$.
For $1 \leq t <i$, we can write
\begin{displaymath}
X(q_i(t)) = \Big\{ x \in \mathbb{R}^n  | x = f_{u(t-1)} \circ \hdots \circ f_{u(0)} (x_o) \textrm{        for some    } 
x_o \in \mathbb{R}^n \textrm{     that satisfies     } (\ref{Eq:FeasibleI}) \Big\}
\end{displaymath}
and
\begin{displaymath}
X(q_{i+1}(t)) = \Big\{ x \in \mathbb{R}^n  | x = f_{u(t-1)} \circ \hdots \circ f_{u(0)} (x_o) \textrm{        for some    } 
x_o \in \mathbb{R}^n \textrm{     that satisfies     } (\ref{Eq:FeasibleI}) \Big\}.
\end{displaymath}
Since $X(q_i(t)) = X(q_{i+1}(t))$, it follows from (\ref{Eq:vOutputFunction}) that
$\mu(\hat{v}_{i+1}(t)) = \mu(\hat{v}_i(t))$ for all $1 \leq t <i$.
For $t \geq i$, we can write
\begin{displaymath}
X(q_i(t)) = \Big\{ x \in \mathbb{R}^n  | x = f_{u(t-1)} \circ \hdots \circ f_{u(t-i)} (x_o) \textrm{        for some    } 
x_o \in \mathbb{R}^n \textrm{     that satisfies     } (\ref{Eq:FeasibleF}) \Big\}
\end{displaymath}
and
\begin{displaymath}
X(q_{i+1}(t)) = \Big\{ x \in \mathbb{R}^n  | x = f_{u(t-1)} \circ \hdots \circ f_{u(t-i-1)} (x_o) \textrm{        for some    } x_o \in \mathbb{R}^n \textrm{     that satisfies     } (\ref{Eq:FeasibleF}) \textrm{   with   } i+1 \textrm{   replacing   } i \Big\}.
\end{displaymath}
Thus $X(q_{i+1}(t)) \subseteq X(q_{i}(t))$.
Letting 
$$\displaystyle \overline{x}_i(t) = \argmax_{x \in X(q_i(t))} \mu(h(x(t)))$$
and 
$$\displaystyle \overline{x}_{i+1}(t) = \argmax_{x \in X(q_{i+1}(t))} \mu(h(x(t))),$$
it follows from (\ref{Eq:vOutputFunction}) that
\begin{equation*}
\mu(\hat{v}_{i+1}(t)) = \mu(h(\overline{x}_{i+1}(t))) \leq 
\mu(h(\overline{x}_i(t))) = \mu (\hat{v}_i (t)).
\end{equation*}
Finally, we note that our argument is independent of the specific choice of $\mathbf{u} \in \mathcal{U}^{\mathbb{Z}_+}$,
and is also independent of the initial state of $P$,
which concludes our proof.
\qedsymbol
\end{proof}
\smallskip

We can now state and prove the main result in this Section:

%%%%%
\smallskip
\begin{lemma}
\label{Lemma:ConditionB}
Consider a plant $P$ as in (\ref{eq:Plant}), 
a performance objective as in (\ref{eq:ObjectiveP}),
and a sequence of DFMs $\{M_i\}_{i=1}^{\infty}$ constructed following the procedure given in 
Section \ref{SSec:ConstructionDetails}
and used with the structure shown in Figure \ref{Fig:Structure}.
There exists a surjective map $\psi_i: P \rightarrow \hat{P}_i$
satisfying (\ref{Eq:MapCondition}) such that for every $(\mathbf{u},(\mathbf{y},\mathbf{v})) \in P$, we have
\begin{equation}
\tag{\ref{eq:Objectivenounds}}
\mu(v(t)) \leq \mu(\hat{v}_{i+1}(t)) \leq \mu(\hat{v}_i(t)),  
\end{equation}
for all $t \in \mathbb{Z}_+$, where 
$$ (\mathbf{u},(\mathbf{\hat{y}_i},\mathbf{\hat{v}_i})) = \psi_i \Big( ( \mathbf{u},(\mathbf{y},\mathbf{v}))  \Big),$$
$$(\mathbf{u},(\mathbf{\hat{y}_{i+1}},\mathbf{\hat{v}_{i+1}})) = \psi_{i+1} \Big( (\mathbf{u},(\mathbf{y},\mathbf{v}))  \Big).$$
\end{lemma}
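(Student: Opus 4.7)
The plan is to recognize that the desired chain of inequalities is really just a repackaging of Propositions \ref{Prop:vInequality} and \ref{Prop:vInequalityPlus}, once one unpacks what $\psi_i$ actually produces. I would therefore spend most of the proof bookkeeping the relationship between the $\hat{v}_i$ obtained from the map $\psi_i$ in Lemma \ref{Lemma:ConditionA} and the $\hat{v}_i$ that appears in the interconnection diagram of Figure \ref{Fig:PMiInterconnection}, and then invoke the two propositions.

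Concretely, I would start by fixing an arbitrary $(\mathbf{u},(\mathbf{y},\mathbf{v})) \in P$. Recall that the map $\psi_i$ constructed in the proof of Lemma \ref{Lemma:ConditionA} factors as $\psi_i = \psi_{2,i} \circ \psi_{1,i}$, where $\psi_{1,i}$ sends $(\mathbf{u},(\mathbf{y},\mathbf{v}))$ to $((\mathbf{u},\mathbf{y}),(\tilde{\mathbf{y}},\hat{\mathbf{v}}_i))$ with $(\tilde{\mathbf{y}},\hat{\mathbf{v}}_i)$ being the unique output response of $M_i$ to the input signal $(\mathbf{u},\mathbf{y})$ starting from the fixed initial state $q_o$, and $\psi_{2,i}$ then peels off the signals to form $(\mathbf{u},(\mathbf{y},\hat{\mathbf{v}}_i))$. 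Thus the $\hat{\mathbf{v}}_i$ that appears in the statement of the lemma coincides with the performance output of $M_i$ when $M_i$ is driven, from initial state $q_o$, by the input pair $(\mathbf{u},\mathbf{y})$ with $\mathbf{y}$ supplied by $P$ running with some $x(0)$ producing the feasible triple $(\mathbf{u},(\mathbf{y},\mathbf{v}))$. This is precisely the setup of the interconnection of $P$ and $M_i$ depicted in Figure \ref{Fig:PMiInterconnection}. An identical argument applies to $\hat{\mathbf{v}}_{i+1}$ via $\psi_{i+1}$.

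Once this identification is made, Proposition \ref{Prop:vInequality} applied to the $(P,M_i)$ interconnection immediately yields $\mu(v(t)) \le \mu(\hat{v}_i(t))$, and the same proposition applied to the $(P,M_{i+1})$ interconnection yields $\mu(v(t)) \le \mu(\hat{v}_{i+1}(t))$. Proposition \ref{Prop:vInequalityPlus} applied to the combined $(P,M_i,M_{i+1})$ interconnection yields $\mu(\hat{v}_{i+1}(t)) \le \mu(\hat{v}_i(t))$. Chaining the two relevant inequalities gives (\ref{eq:Objectivenounds}) for every $t \in \mathbb{Z}_+$. Since the choice of $(\mathbf{u},(\mathbf{y},\mathbf{v})) \in P$ was arbitrary, the conclusion follows.

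The only point requiring care, and the one I would write out most explicitly, is the equivalence between the ``$\hat{v}_i$ delivered by $\psi_i$'' and the ``$\hat{v}_i$ produced by the interconnection in Figure \ref{Fig:PMiInterconnection}''; this relies on $M_i$ being deterministic with fixed initial state $q_o$ so that the response to the driving pair $(\mathbf{u},\mathbf{y})$ is unique, and on the well-posedness observation (from Section \ref{SSec:ExistenceOfMap}) that $\tilde{y}$ does not depend instantaneously on $y$. No fundamentally new argument is needed beyond Propositions \ref{Prop:vInequality} and \ref{Prop:vInequalityPlus}; the lemma is essentially their composition, lifted from the state-space interconnection picture back to the signal-level maps $\psi_i$ of Definition \ref{Def:DFMApproximation}.
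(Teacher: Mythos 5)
Your proposal is correct and follows essentially the same route as the paper's own proof: unpack $\psi_i = \psi_{2,i}\circ\psi_{1,i}$ to identify $\hat{\mathbf{v}}_i$ with the unique output of $M_i$ driven by $(\mathbf{u},\mathbf{y})$ from the fixed initial state, and then invoke Propositions \ref{Prop:vInequality} and \ref{Prop:vInequalityPlus}. Your extra care in spelling out the identification with the interconnection of Figure \ref{Fig:PMiInterconnection} is a slightly more explicit version of the step the paper states as ``it suffices to show,'' but the argument is the same.
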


\begin{proof}
Consider the map $\psi_i : P \rightarrow \hat{P}_i$ constructed in the proof of Lemma \ref{Lemma:ConditionA}.
We have $\psi_{i} = \psi_{2,i} \circ \psi_{1,i}$ where
 $\psi_{1,i} : P \rightarrow M_i$ is defined by
$$\psi_{1,i}\Big( (\mathbf{u_o},(\mathbf{y_o},\mathbf{v}) \Big) = ((\mathbf{u_o},\mathbf{y_o}),
(\tilde{\mathbf{y}}_i,\hat{\mathbf{v}}_i)) \in M_i.$$
Here $(\tilde{\mathbf{y}},\hat{\mathbf{v}}_i)$ is the 
{\it unique} output response of $M_i$ to input $(\mathbf{u}_o,\mathbf{y_o})$ 
for initial condition $q_i(0)$. 
Also recall that $\psi_{2,i} : \psi_{1,i} (P) \rightarrow \hat{P}_i$ was defined by
\begin{displaymath}
\psi_{2,i} \Big( ((\mathbf{u_o},\mathbf{y_o}),(\tilde{\mathbf{y}}_i,\hat{\mathbf{v}}_i)) \Big) 
= (\mathbf{u_o},(\mathbf{y_o},\hat{\mathbf{v}}_i)).
\end{displaymath}
Thus it suffices to show that for any $(\mathbf{u_o},(\mathbf{y_o},\mathbf{v})) \in P$,
the outputs of $M_i$ and $M_{i+1}$, 
$(\tilde{\mathbf{y}}_i,\hat{\mathbf{v}}_i)$ and $(\tilde{\mathbf{y}}_{i+1},\hat{\mathbf{v}}_{i+1})$,
respectively, in response to input
$(\mathbf{u}_o,\mathbf{y_o})$, satisfy the desired condition.
This follows directly from Propositions \ref{Prop:vInequality} and \ref{Prop:vInequalityPlus}.
\end{proof}
\smallskip

%%%%%%%%%%%%%%%%%%%%%%%%%%%%%%%%%%%%%%%
\subsection{Condition on the Gains}
\label{SSec:PropertyC}

In this Section, we first show that under some mild additional assumptions,
the proposed construction of $\{ M_i \}_{i=1}^{\infty}$ 
together with the structure shown in Figure \ref{Fig:Structure} meet the gain inequality in 
property c) of Definition \ref{Def:DFMApproximation}.
We begin by establishing some facts that will be useful in our analysis:

%%%%
\smallskip
\begin{prop}
\label{Prop:Yinclusion}
Consider a plant $P$ as in (\ref{eq:Plant}), 
a performance objective as in (\ref{eq:ObjectiveP}),
and a DFM $M_i$ constructed following the procedure given in 
Section \ref{SSec:ConstructionDetails} for some $i \geq 1$.
Consider the interconnection of $P$ and $M_i$ as shown in Figure \ref{Fig:PMiInterconnection}.
Let $y(t)$ and $x(t)$ be the output and state, respectively, of $P$ at time $t$.
Let $q_i(t)$ be the state of $M_i$ at time $t$.
For any choice of $\mathbf{u} \in \mathcal{U}^{\mathbb{Z}_+}$ and $x(0) \in \mathbb{R}^n$,
we have
\begin{displaymath}
y(t) \in Y(q_i(t)), \textrm{   for all   } t \geq 0,
\end{displaymath}
for $Y$ defined in (\ref{Eq:Ydefinition}).
\end{prop}

\begin{proof}
Pick a choice $\mathbf{u} \in \mathcal{U}^{\mathbb{Z}_+}$ and $x(0) \in \mathbb{R}^n$.
By Proposition \ref{Prop:Xinclusion}, 
we have $x(t) \in X(q_i(t))$ for all $t \geq 0$.
It thus follows that $y(t) = g(x(t)) \in Y(q_i(t)) = g(X(q_i(t)))$, for all $t \geq 0$.
Finally, we note that our argument is independent of the specific choice of $\mathbf{u} \in \mathcal{U}^{\mathbb{Z}_+}$,
and is also independent of the initial state of $P$,
which concludes our proof.
\end{proof}
\smallskip

%%%%%
\smallskip
\begin{prop}
\label{Prop:YHierarchy}
Consider a plant $P$ as in (\ref{eq:Plant}), 
a performance objective as in (\ref{eq:ObjectiveP}),
and two DFMs $M_i$ and $M_{i+1}$ constructed following the procedure given in 
Section \ref{SSec:ConstructionDetails}, for some $i \geq 1$.
Consider the interconnection of $P$, $M_i$ and $M_{i+1}$ as shown in Figure \ref{Fig:PMiInterconnection}.
For any choice of $\mathbf{u} \in \mathcal{U^{\mathbb{Z}_+}}$ and initial state $x(0) \in \mathbb{R}^ n$
of $P$, we have
\begin{displaymath}
Y(q_{i+1}(t)) \subseteq Y(q_i(t)), \textrm{          for     } t \geq 0.
\end{displaymath}
\end{prop}

\begin{proof}
Pick a choice $\mathbf{u} \in \mathcal{U^{\mathbb{Z}_+}}$ and $x_o \in \mathbb{R}^n$.
By arguments similar to those made in the proof of Proposition \ref{Prop:vInequalityPlus},
omitted here for brevity,
we have
\begin{displaymath}
\left\{ \begin{array}{ll}
X(q_{i+1}(t)) = X(q_i(t)), \textrm{          for     } 0 \leq t < i \\
X(q_{i+1}(t)) \subseteq X(q_i(t)), \textrm{          for     } t \geq i
\end{array} \right.
\end{displaymath}
It thus follows, taking into account (\ref{Eq:Ydefinition}), that
\begin{displaymath}
\left\{ \begin{array}{ll}
Y(q_{i+1}(t)) = Y(q_i(t)), \textrm{          for     } 0 \leq t < i \\
Y(q_{i+1}(t)) \subseteq Y(q_i(t)), \textrm{          for     } t \geq i
\end{array} \right.
\end{displaymath}
which concludes our proof.
\end{proof}
\smallskip

\smallskip
\begin{defnt}
\label{Def:PositiveDefinite}
Let $\mathcal{W} = \{0, 1,\hdots, p-1\}$ for some integer $p$.
A function $\mu: \mathcal{W} \rightarrow \mathbb{R}_+$ is positive definite if
$\mu(w) \geq 0$ for all $w \in \mathcal{W}$ and $\mu(w)=0$ iff $w=0$.
\end{defnt}
\smallskip

\smallskip
\begin{defnt}
\label{Def:Flat}
Let $\mathcal{W} = \{0, 1,\hdots, p-1\}$ for some integer $p$ and consider a positive definite
function $\mu: \mathcal{W} \rightarrow \mathbb{R}_+$. 
$\mu$ is flat if there exists an $\alpha >0$ such that
$\mu(w)=\alpha$ for every $w \neq 0$.
\end{defnt}
\smallskip

\begin{defnt}
\label{Def:Child}
Consider a plant $P$ as in (\ref{eq:Plant}), 
a performance objective as in (\ref{eq:ObjectiveP}),
and a sequence of DFMs $\{ M_i \}_{i=1}^{\infty}$ constructed as described in Section \ref{SSec:ConstructionDetails}.
$q_{i+1} = (y_1,\hdots, y_j, y_{j+1},u_1,\hdots, u_j, u_{j+1}) \in \mathcal{Q}_{i+1}\setminus{\{q_o,q_{\emptyset}\}}$
is said to be a child of $q_i \in \mathcal{Q}_{i}$ if 
\begin{displaymath}
q _i= \left\{ \begin{array}{ll}
(y_1,\hdots,y_j,u_1,\hdots,u_j) & \textrm{      when   } j=i \\
q_{i+1} & \textrm{     when   } 1\leq j < i
\end{array} \right.
\end{displaymath}
We denote this by writing $q_{i+1} \in \mathcal{C}(q_i)$.
We consider $q_o$ and $q_{\emptyset}$ in $\mathcal{Q}_{i+1}$ to be children
of $q_o$ and $q_{\emptyset}$, respectively, in $\mathcal{Q}_{i}$.
\end{defnt}

\begin{prop}
Consider a plant $P$ as in (\ref{eq:Plant}), 
a performance objective as in (\ref{eq:ObjectiveP}),
and a sequence of DFMs $\{ M_i \}_{i=1}^{\infty}$ constructed as described in Section \ref{SSec:ConstructionDetails}.
For every $q_{i+1} \in \mathcal{Q}_{i+1}$,
there exists a unique $q_i \in \mathcal{Q}_i$ such that $q_{i+1} \in \mathcal{C}(q_i)$.
\end{prop}

\begin{proof}
Existence follows from Definition \ref{Def:Child} and the definition of the states. 
Uniqueness follows directly from Definition \ref{Def:Child}.
\end{proof}
\smallskip

\begin{remark}
The intuition here is that the set of states of $M_{i+1}$ can be partitioned into equivalence classes:
Elements of each equivalence class are children of the same state of $M_i$.
\end{remark}

\begin{prop}
\label{Prop}
Consider a plant $P$ as in (\ref{eq:Plant}), 
a performance objective as in (\ref{eq:ObjectiveP}),
and a sequence of DFMs $\{ M_i \}_{i=1}^{\infty}$ constructed as described in Section \ref{SSec:ConstructionDetails}.
For every $q_{i+1} \in \mathcal{Q}_{i+1}$,
$q_i \in \mathcal{Q}_i$ such that $q_{i+1} \in \mathcal{C}(q_i)$,
we have
$ X(q_{i+1}) \subseteq X(q_{i})$ and 
$ Y(q_{i+1}) \subseteq Y(q_{i})$.
\end{prop}

\begin{proof}
The proof follows directly from Definition \ref{Def:Child}
and the definitions of $X$ and $Y$.
\end{proof}
\smallskip

\smallskip
\begin{defnt}
\label{Def:OutputNested}
Consider a plant $P$ as in (\ref{eq:Plant}), 
a performance objective as in (\ref{eq:ObjectiveP}),
and a sequence of DFMs $\{ M_i \}_{i=1}^{\infty}$ constructed as described in Section \ref{SSec:ConstructionDetails}.
The sequence $\{M_i\}_{i=1}^{\infty}$ is output-nested if for every $i \in \mathbb{Z}_+$, 
$q_{i+1} \in \mathcal{Q}_{i+1}$ and $q_{i} \in \mathcal{Q}_{i}$ such that $q_{i+1} \in \mathcal{C}(q_{i})$,
if $g_{i}(q_i) \in Y(q_{i+1})$ then $g_{i+1}(q_{i+1})=g_{i}(q_i)$.
\end{defnt}
\smallskip

\begin{remark}
Intuitively, a sequence is output nested if every child is associated with the same output as its parent 
whenever that output is feasible for the child. 
\end{remark}

We can now prove the following:

%%%%%
\smallskip
\begin{prop}
\label{Prop:MuHierarchy}
Consider a plant $P$ as in (\ref{eq:Plant}), 
a performance objective as in (\ref{eq:ObjectiveP}),
and two DFMs $M_i$ and $M_{i+1}$ constructed following the procedure given in 
Section \ref{SSec:ConstructionDetails}, for some $i \geq 1$.
Consider the interconnection of $P$, $M_i$ and $M_{i+1}$ as shown in Figure \ref{Fig:PMiInterconnection}.
Let $w_i(t) = \beta (y(t),\tilde{y}_i(t))$ and 
$w_{i+1}(t) = \beta (y(t),\tilde{y}_{i+1}(t))$
for $\beta$ defined in Table \ref{Table:BetaDefinition},
and consider a flat, positive definite function $\mu_{\Delta}:\mathcal{W} \rightarrow \mathbb{R}_+$.
Assume that the sequence $\{ M_i \}_{i=1}^{\infty}$ is output nested.
For any choice of $\mathbf{u} \in \mathcal{U^{\mathbb{Z}_+}}$ and initial state $x(0) \in \mathbb{R}^ n$
of $P$, we have
\begin{displaymath}
\mu_{\Delta} (w_{i+1}(t)) \leq \mu_{\Delta} (w_i(t)),
\end{displaymath}
for all $t \geq 0$.
\end{prop}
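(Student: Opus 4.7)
The plan is to reduce the inequality to a combinatorial fact about the symbol $w_{i+1}(t)$ itself, exploiting positive definiteness of $\mu_\Delta$. Specifically, since $\mu_\Delta(w) = 0$ iff $w = 0$, the desired inequality will follow at once if I can show that at every time $t$, either $w_{i+1}(t) = 0$ or $w_{i+1}(t) = w_i(t)$: in the first case the left-hand side vanishes, and in the second case equality holds trivially. Recall that by the construction of $\beta$ in Table \ref{Table:BetaDefinition} (diagonal entries equal $0$), we have $\beta(y, y') = 0$ if and only if $y = y'$. So $w_{i+1}(t) = 0$ iff $\tilde{y}_{i+1}(t) = y(t)$.

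First I would relate $q_i(t)$ and $q_{i+1}(t)$ along the trajectory: unwinding the transition function from $q_o$ under the same driving sequences $\mathbf{u},\mathbf{y}$ shows that for $t \geq i+1$ the states take the form $q_i(t) = (y(t-1),\ldots,y(t-i),u(t-1),\ldots,u(t-i))$ and $q_{i+1}(t) = (y(t-1),\ldots,y(t-i-1),u(t-1),\ldots,u(t-i-1))$, so $q_{i+1}(t)$ is exactly the one-step ``extension'' of $q_i(t)$ required by Definition \ref{Def:OutputNested}; for $0 \leq t \leq i$ the two trajectories visit the identical tuple in the initial portion of the state set, so $q_i(t) = q_{i+1}(t)$ and the output-nested convention forces $\tilde y_i(t) = \tilde y_{i+1}(t)$, hence $w_i(t) = w_{i+1}(t)$ directly.

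Next, I would apply Proposition \ref{Prop:Xinclusion} to both $M_i$ and $M_{i+1}$ to conclude $x(t) \in X(q_{i+1}(t)) \subseteq X(q_i(t))$, so that $y(t) = g(x(t)) \in Y(q_{i+1}(t)) \subseteq Y(q_i(t))$. In particular $Y(q_{i+1}(t))$ is non-empty. Then I split cases on $|Y(q_{i+1}(t))|$. If $|Y(q_{i+1}(t))| = 1$, this singleton must be $\{y(t)\}$, and the definition of $g_{i+1}$ forces $\tilde y_{i+1}(t) = y(t)$, so that $w_{i+1}(t) = \beta(y(t),y(t)) = 0$ and $\mu_\Delta(w_{i+1}(t)) = 0 \leq \mu_\Delta(w_i(t))$. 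If instead $|Y(q_{i+1}(t))| > 1$, then the output-nested hypothesis applies directly to the extension pair $(q_i(t),q_{i+1}(t))$ and yields $g_{i+1}(q_{i+1}(t)) = g_i(q_i(t))$, i.e., $\tilde y_{i+1}(t) = \tilde y_i(t)$, whence $w_{i+1}(t) = w_i(t)$ and the inequality reduces to equality.

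The main obstacle I anticipate is the bookkeeping needed to handle all three regimes $t = 0$, $1 \leq t \leq i$, and $t \geq i+1$ uniformly, since Definition \ref{Def:OutputNested} is phrased for the length-$i$-to-length-$(i{+}1)$ extension typical of final states, while for small $t$ the relevant states lie in the initial state portion and coincide as tuples in $M_i$ and $M_{i+1}$. Once one checks that ``coincident tuple'' states are treated consistently (either by the same convention that underlies output-nestedness or by direct inspection of the construction), the case analysis above closes the argument without further computation.
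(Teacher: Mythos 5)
Your proposal is correct and follows essentially the same route as the paper's proof: establish $X(q_{i+1}(t)) \subseteq X(q_i(t))$ along the trajectory, invoke Proposition \ref{Prop:Xinclusion} to place $y(t)$ in $Y(q_{i+1}(t))$, and split on $|Y(q_{i+1}(t))|$, using $\beta(y,y)=0$ together with positive definiteness of $\mu_{\Delta}$ in the singleton case and output-nestedness in the other. The bookkeeping subtlety you flag for the initial/coincident-tuple regime is present in the paper's argument as well and is resolved there by the same convention you describe.
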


\begin{proof}
Fix $i$.
Pick a choice $\mathbf{u} \in \mathcal{U^{\mathbb{Z}_+}}$, $x_o \in \mathbb{R}^n$.
Let $q_i(t)$ and $q_{i+1}(t)$ denote the states of $M_i$ and $M_{i+1}$,
respectively, at time $t$.
If $g_i(q_i(t)) \in Y(q_{i+1}(t))$,
we have $\tilde{y}_{i+1}(t) = g_{i+1} (q_{i+1}(t)) = g_{i} (q_i(t))=  \tilde{y}_i(t)$ 
since $\{ M_i \}_{i=1}^{\infty}$ is output nested.
Thus $w_{i+1}(t) = w_{i}(t)$,
and $\mu_{\Delta} (w_{i+1}(t)) = \mu_{\Delta} (w_i(t))$.
On the other hand,
if $g_i(q_i(t)) \notin Y(q_{i+1}(t))$,
we have $y(t) \neq \tilde{y}_i(t)$ since
$y(t) \in Y(q_{i+1}(t))$ by Proposition \ref{Prop:Yinclusion}.
It follows that $w_i(t) \neq 0$ and $\mu_{\Delta}(w_i(t)) =\alpha$, 
the unique positive number in the range of $\mu_{\Delta}$.
Meanwhile, $w_{i+1}(t)$ may or may not be zero,
and in both cases the inequality
$\mu_{\Delta} (w_{i+1}(t)) \leq \mu_{\Delta} (w_i(t))$
since $\mu_{\Delta}$ is flat and positive definite.

What is left is to note that our argument was independent of the choice of 
$\mathbf{u} \in \mathcal{U}^{\mathbb{Z}_+}$, $x(0) \in \mathbb{R}^n$, and $i$.
\end{proof}
\smallskip

\color{black}
We are now ready to state and prove the main result in this Section:

%%%%%
\smallskip
\begin{lemma}
\label{Lemma:ConditionC}
Consider a plant $P$ as in (\ref{eq:Plant}), 
a performance objective as in (\ref{eq:ObjectiveP}),
a disturbance alphabet $\mathcal{W} = \{0,\hdots,p-1 \}$ where $p=|\mathcal{Y}|$,
$\beta: \mathcal{Y} \times \mathcal{Y} \rightarrow \mathcal{W}$ defined as in Table \ref{Table:BetaDefinition},
a flat, positive definite function $\mu_{\Delta}: \mathcal{W} \rightarrow \mathbb{R}_+$,
and a sequence of DFM $\{M_i\}_{i=1}^{\infty}$ constructed following the procedure given in 
Section \ref{SSec:ConstructionDetails}.
Assume that $\{M_i\}_{i=1}^{\infty}$ is output nested.
For any $i \geq 1$,
the gains of $\Delta_i$ and $\Delta_{i+1}$ 
satisfy $\gamma_{i} \geq \gamma_{i+1}$.
\end{lemma}

\begin{proof}
Fix $i$, and let $\gamma_i$ be the gain of $\Delta_i$.
Pick a choice of $(\mathbf{u_o},(\mathbf{y_o},\mathbf{v})) \in P$,
and consider the setup shown in \ref{Fig:PMiInterconnection}.
Let $(\tilde{\mathbf{y}}_i,\hat{\mathbf{v}}_i)$ and $(\tilde{\mathbf{y}}_{i+1},\hat{\mathbf{v}}_{i+1})$
be the unique outputs of $M_i$ and $M_{i+1}$, respectively, 
in response to input $(\mathbf{u}_o,\mathbf{y_o})$.
Let $w_j(t) = \beta(\tilde{y}_j(t), y_o(t))$ for $j=i,i+1$.
It follows from Proposition \ref{Prop:MuHierarchy} that 
\begin{eqnarray*}
\mu_{\Delta}(w_{i+1}(t)) \leq \mu_{\Delta} (w_{i}(t)), \textrm{     } \forall t & \Leftrightarrow & 
- \mu_{\Delta}(w_{i+1}(t)) \geq -\mu_{\Delta} (w_{i}(t)), \textrm{      } \forall t \\
& \Leftrightarrow & 
\gamma_i \rho_{\Delta}(u_o(t)) - \mu_{\Delta}(w_{i+1}(t)) 
\geq \gamma_i \rho_{\Delta}(u_o(t)) -\mu_{\Delta} (w_{i}(t)),
\textrm{      } \forall t \\
& \Rightarrow & 
\sum_{t=0}^{T }\gamma_i \rho_{\Delta}(u_o(t)) - \mu_{\Delta}(w_{i+1}(t)) 
\geq 
\sum_{t=0}^{T} \gamma_i \rho_{\Delta}(u_o(t)) -\mu_{\Delta} (w_{i}(t)),
\textrm{      } \forall T \\
& \Rightarrow & 
\sum_{t=0}^{T }\gamma_i \rho_{\Delta}(u_o(t)) - \mu_{\Delta}(w_{i+1}(t)) 
\geq 
\inf_{t \geq 0} \sum_{t=0}^{T} \gamma_i \rho_{\Delta}(u_o(t)) -\mu_{\Delta} (w_{i}(t)),
\textrm{      } \forall T \\
& \Rightarrow & 
\inf_{T \geq 0} \sum_{t=0}^{T }\gamma_i \rho_{\Delta}(u_o(t)) - \mu_{\Delta}(w_{i+1}(t)) 
\geq 
\inf_{t \geq 0} \sum_{t=0}^{T} \gamma_i \rho_{\Delta}(u_o(t)) -\mu_{\Delta} (w_{i}(t))
\end{eqnarray*}
Letting $\tilde{\gamma}_{i+1} = \inf \gamma$ such that
\begin{displaymath}
\inf_{T \geq 0} \sum_{t=0}^{T }\gamma \rho_{\Delta}(u_o(t)) - \mu_{\Delta}(w_{i+1}(t)) > -\infty,
\end{displaymath}
we have $\tilde{\gamma}_{i+1} \leq \gamma_i$.
Since this argument holds for any choice of  $(\mathbf{u_o},(\mathbf{y_o},\mathbf{v})) \in P$,
we have
\begin{displaymath}
\gamma_{i+1} = \inf \{ \tilde{\gamma}_{i+1} \} \leq \gamma_i,
\end{displaymath}
where the `inf' is understood to be taken over all possible choices of feasible signals of $P$.
\end{proof}
\smallskip

%%%%%%%%%%%%%%%%%%%%%%%%%%%%%%%%%%%%%%%%%%%%%%%%%%%%
\color{black}
\subsection{Ensuring Finite Error Gain}
\label{SSec:FiniteGain}

Note that Lemma \ref{Lemma:ConditionC},
while effectively establishing a hierarchy of approximations,
does not address the question: When is $\gamma_{i}$ finite?
A straightforward way to guarantee that is to require $\rho_{\Delta}(z) >0$ for all $z$.
While this may be meaningful in a setup where we have no preference for specific choices of control inputs 
(since $z=u$ in our proposed structure),
this may be too restrictive in general,
particularly when we wish to retain the ability to penalize certain inputs.

In this Section, we first propose a tractable
approach for establishing an upper bound for the approximation error:
The idea is to verify instead that an appropriately constructed DFM satisfies a suitably defined gain condition.
We then use this approach as the basis for deriving a readily verifiable 
sufficient condition for the gain to be finite.

We begin by associating with each approximate model $M_i$ 
two new DFMs:

\smallskip
\begin{defnt}
\label{Def:DFMExtension}
Consider a plant $P$ as in (\ref{eq:Plant}), 
a performance objective as in (\ref{eq:ObjectiveP}),
a disturbance alphabet $\mathcal{W} = \{0,\hdots,p-1 \}$ where $p=|\mathcal{Y}|$,
$\beta: \mathcal{Y} \times \mathcal{Y} \rightarrow \mathcal{W}$ defined as in Table \ref{Table:BetaDefinition},
a positive definite function $\mu_{\Delta}: \mathcal{W} \rightarrow \mathbb{R}_+$,
and a DFM $M_i$ constructed following the procedure given in 
Section \ref{SSec:ConstructionDetails}, for some $i \geq 1$.
The e-extension of $M_i$, denoted by $M_i^{e}$, is a new DFM,
$M_i^e \subset (\mathcal{U} \times \mathcal{Y})^{\mathbb{Z}_+} \times (\mathcal{Y \times \hat{\mathcal{V}}}_i \times \mathbb{R}_+)^{\mathbb{Z}_+}$,
obtained from $M_i$ by 
introducing one additional output $e : \mathcal{Q}_i \rightarrow \mathbb{R}_+$
defined by
\begin{displaymath}
e(q_i) = \left\{ \begin{array}{ll}
0 & \textrm{          if     } q_i=q_{\emptyset}\\
\displaystyle \max_{y_1,y_2 \in Y(q_i)} \mu_{\Delta} (\beta(y_1,y_2)) & \textrm{          otherwise}  
\end{array} \right. .
\end{displaymath}
\end{defnt}
\smallskip

\begin{remark}
It follows in Definition \ref{Def:DFMExtension} that when $q_i \neq q_{\emptyset}$,
$e(q_i)=0 \Leftrightarrow |Y(q_i)|=0$.
\end{remark}

\smallskip
\begin{defnt}
\label{Def:ZeroReduction}
Consider a plant $P$ as in (\ref{eq:Plant}), 
a performance objective as in (\ref{eq:ObjectiveP}),
a DFM $M_i$ constructed following the procedure given in 
Section \ref{SSec:ConstructionDetails}, for some $i \geq 1$,
and a choice $\rho_{\Delta}: \mathcal{U} \rightarrow \mathbb{R}_+$.
Let $\overline{\mathcal{U}} = \{ u\in \mathcal{U} | \rho_{\Delta}(u)=0 \}$.
The 0-reduction of $M_i$, denoted by $M_i^{0}$, is a new DFM,
$M_i^0 \subset (\overline{\mathcal{U}} \times \mathcal{Y})^{\mathbb{Z}_+} \times (\mathcal{Y \times \hat{\mathcal{V}}}_i \times \mathbb{R}_+)^{\mathbb{Z}_+}$,
obtained from $M_i^e$, the e-extension of $M_i$,
 by restricting the first input of $M_i^e$ to $\overline{\mathcal{U}}$. 
\end{defnt}
\smallskip

\begin{remark}
It follows from Definition \ref{Def:ZeroReduction} that a state $q_i$ of $M_i$,
and thus also of $M_i^e$,
$q_i=(y_1,\hdots,y_j,u_1,\hdots,u_j)$ for some $j \in \{0,\hdots,i\}$,
is a state of $M_i^0$ iff $u_k \in \overline{\mathcal{U}}$ for {\bf all} $k \in \{1,\hdots,j\}$.
The number of states of $M_i^0$ can thus be significantly lower than that of $M_i$ and $M_i^e$.
Likewise, the number of state transitions can be significantly lower.
\end{remark}

We are now ready to present an approach for verifying an upper bound for $\gamma_i$:

%%%%%
\smallskip
\begin{lemma}
\label{Lemma:GammaUpperBound}
Consider a plant $P$ as in (\ref{eq:Plant}), 
a performance objective as in (\ref{eq:ObjectiveP}),
a disturbance alphabet $\mathcal{W} = \{0,\hdots,p-1 \}$ where $p=|\mathcal{Y}|$,
$\rho_{\Delta}: \mathcal{U} \rightarrow \mathbb{R}_+$,
positive definite $\mu_{\Delta}: \mathcal{W} \rightarrow \mathbb{R}_+$,
and a DFM $M_i$ constructed following the procedure given in 
Section \ref{SSec:ConstructionDetails}, for some $i \geq 1$.
Let $\gamma_i$ be the gain of the corresponding error system $\Delta_i$
shown in Figure \ref{Fig:Structure} with $\beta: \mathcal{Y} \times \mathcal{Y} \rightarrow \mathcal{W}$ 
defined as in Table \ref{Table:BetaDefinition}.
Let $\hat{\gamma}_{i}$ be the infimum of $\gamma$ such that the e-extension of $M_i$, $M_i^e$, satisfies
\begin{equation}
\label{Eq:GainBound}
\inf_{T \geq 0} \sum_{t=0}^{T} \gamma \rho_{\Delta} (u(t)) - e_i(t) > -\infty.
\end{equation}
We have $\gamma_i \leq \hat{\gamma}_i$.
\end{lemma}
 
\begin{proof}
Assume that $M_i^e$ satisfies (\ref{Eq:GainBound}).
Pick a choice of $(\mathbf{u_o},(\mathbf{y_o},\mathbf{v})) \in P$ and
consider the interconnection of $P$ and $M_i$ shown in Figure \ref{Fig:PMiInterconnection}.
Let $x(t)$ and $q_i(t)$ be the states of $P$ and $M_i$, respectively, at time $t$,
and let $e_i(t)$ be the output of $M_i^e$ for input $(\mathbf{u_o},\mathbf{y}_o)$.
Note that the state of $M_i^e$ at time $t$ is also $q_i(t)$. 

If $|Y(q_i(t))| =1$, we have $e_i(t)=0$ by definition.
It also follows from Proposition \ref{Prop:Yinclusion}
and the fact that $Y(q_i(t))$ is a singleton
that $y(t) = \tilde{y}_i(t)$,
and thus $w(t) =0$ by the definition of $\beta$.
We thus have $e_i(t) = \mu_{\Delta}(w(t))$.
When $|Y(q_i(t))| >1$, we have 
$\displaystyle e_i(t) = \max_{y_1,y_2 \in Y(q_i(t))} \mu_{\Delta}(\beta(y_1,y_2)) \geq \mu_{\Delta} (w(t))$,
where the inequality again follows from Proposition \ref{Prop:Yinclusion}.
It thus follows that $e_i(t) \geq \mu_{\Delta}(w(t))$ for all $t \geq 0$, and we can now write
\begin{eqnarray*}
\mu_{\Delta}(w(t)) \leq e_i(t), \textrm{     } \forall t & \Leftrightarrow & 
- \mu_{\Delta}(w(t)) \geq - e_i(t), \textrm{      } \forall t \\
& \Leftrightarrow & 
\hat{\gamma}_i \rho_{\Delta}(u_o(t)) - \mu_{\Delta}(w(t)) 
\geq \hat{\gamma}_i \rho_{\Delta}(u_o(t)) -e_i(t),
\textrm{      } \forall t \\
& \Rightarrow & 
\sum_{t=0}^{T } \hat{\gamma}_i \rho_{\Delta}(u_o(t)) - \mu_{\Delta}(w(t)) 
\geq 
\sum_{t=0}^{T} \hat{\gamma}_i \rho_{\Delta}(u_o(t)) - e_i(t),
\textrm{      } \forall T \\
& \Rightarrow & 
\sum_{t=0}^{T }\hat{\gamma}_i \rho_{\Delta}(u_o(t)) - \mu_{\Delta}(w(t)) 
\geq 
\inf_{t \geq 0} \sum_{t=0}^{T} \hat{\gamma}_i \rho_{\Delta}(u_o(t)) - e_i(t),
\textrm{      } \forall T \\
& \Rightarrow & 
\inf_{T \geq 0} \sum_{t=0}^{T }\hat{\gamma}_i \rho_{\Delta}(u_o(t)) - \mu_{\Delta}(w(t)) 
\geq 
\inf_{t \geq 0} \sum_{t=0}^{T} \hat{\gamma}_i \rho_{\Delta}(u_o(t)) -e_i(t)
\end{eqnarray*}
Letting $\tilde{\gamma}_{i} = \inf \gamma$ such that
\begin{displaymath}
\inf_{T \geq 0} \sum_{t=0}^{T }\gamma \rho_{\Delta}(u_o(t)) - \mu_{\Delta}(w(t)) > -\infty,
\end{displaymath}
we have $\tilde{\gamma}_{i} \leq \hat{\gamma}_i$.
Since this argument holds for any choice of  $(\mathbf{u_o},(\mathbf{y_o},\mathbf{v})) \in P$,
we have
\begin{displaymath}
\gamma_{i} = \inf \{ \tilde{\gamma}_{i} \} \leq \hat{\gamma}_i,
\end{displaymath}
where the `inf' is understood to be taken over all possible choices of feasible signals of $P$.
\end{proof}
\smallskip

Lemma \ref{Lemma:GammaUpperBound} essentially establishes an upper bound for the
gain $\gamma_i$ of $\Delta_i$, verified by checking that
$M_i^e$ satisfies a suitably defined gain condition.
Verifying that a DFM satisfies a gain condition can be 
systematically and efficiently done: 
Readers are referred to \cite{JOUR:TaMeDa2008} for the details.
Note that in practice, this approach is typically used for computing an upper bound 
to be used in lieu of the gain for control synthesis,
as the problem of computing the gain of $\Delta_i$ exactly is 
difficult, if not intractable, in general.

Note that to ensure that the gain $\gamma_i$ is finite, 
it suffices to ensure that its upper bound $\hat{\gamma_i}$ established using the approach in 
Lemma \ref{Lemma:GammaUpperBound} is finite.
We can take this a step further,
by proposing a more refined sufficient condition expressed in terms of the 0-reduction of $M_i$,
and that requires significantly less computational effort to verify:

%%%%%
\smallskip
\begin{lemma}
Consider a plant $P$ as in (\ref{eq:Plant}), 
a performance objective as in (\ref{eq:ObjectiveP}),
$\rho_{\Delta}: \mathcal{U} \rightarrow \mathbb{R}_+$,
a positive definite function $\mu_{\Delta}: \mathcal{W} \rightarrow \mathbb{R}_+$,
and a DFM $M_i$ constructed following the procedure given in 
Section \ref{SSec:ConstructionDetails}, for some $i \geq 1$.
Let $\gamma_i$ be the gain of the corresponding error system $\Delta_i$
shown in Figure \ref{Fig:Structure}.
Let $M_i^0$ be the 0-reduction of $M_i$.
If $M_i^0$ satisfies (\ref{Eq:GainBound})
for some finite $\gamma$, then $\gamma_i$ is finite.
\end{lemma}

\begin{proof}
Construct a weighted graph corresponding to $M_i^e$ by associating with every state transition of $M_i^e$ 
a cost, namely `$\gamma \rho_{\Delta}(u) -e_i$' defined by the input $u$ that drives the transition and
the output $e_i$ associated with the beginning state of the transition.
$M_i^e$ satisfies (\ref{Eq:GainBound}) iff every cycle in the corresponding weighted graph
has non-negative total cost -
the proof of this statement is omitted for brevity - readers are referred to \cite{JOUR:TaMeDa2008} for the details.
In particular, $\hat{\gamma}_i$, the infimum of $\gamma$ such that (\ref{Eq:GainBound}) is satisfied,
is infinite iff there exists a cycle in $M_i^e$, driven {\bf entirely} by inputs in $\overline{\mathcal{U}}$,
and such that $e_i \neq 0$ for at least one state along the cycle.
Thus, it suffices to verify that $M_i^0$ satisfies (\ref{Eq:GainBound}) for some finite $\gamma$ to ensure that
$\hat{\gamma_i} < \infty$, from which we can deduce that $\gamma_i$ is finite by Lemma \ref{Lemma:GammaUpperBound}.
\end{proof}
\smallskip

We conclude this section by proving that the gain {\it bounds} established in Lemma \ref{Lemma:GammaUpperBound}
satisfy the hierarchy required in condition c) of Definition \ref{Def:DFMApproximation}.

%%%%%
\smallskip
\begin{lemma}
\label{Lemma:ConditionCbound}
Consider a plant $P$ as in (\ref{eq:Plant}), 
a performance objective as in (\ref{eq:ObjectiveP}),
a disturbance alphabet $\mathcal{W} = \{0,\hdots,p-1 \}$ where $p=|\mathcal{Y}|$,
$\rho_{\Delta}: \mathcal{U} \rightarrow \mathbb{R}_+$,
positive definite $\mu_{\Delta}: \mathcal{W} \rightarrow \mathbb{R}_+$,
and a sequence of DFMs $\{M_i\}_{i=1}^{\infty}$ constructed following the procedure given in 
Section \ref{SSec:ConstructionDetails}.
Let $\hat{\gamma}_{i}$ be the infimum of $\gamma$ such that the e-extension of $M_i$, $M_i^e$, satisfies
(\ref{Eq:GainBound}).
We have $\hat{\gamma}_i \geq \hat{\gamma}_{i+1}$.
\end{lemma}
 
\begin{proof}
Fix $i$.
Pick a choice of $(\mathbf{u_o},(\mathbf{y_o},\mathbf{v})) \in P$ and
consider the interconnection of $P$, $M_i$ and $M_{i+1}$ as shown in Figure \ref{Fig:PMiInterconnection}.
Let $q_i(t)$ and $q_{i+1}(t)$ be the states of $M_{i}$ and $M_{i+1}$, respectively, at time $t$,
and let $e_i(t)$ and $e_{i+1}(t)$ be the outputs of the corresponding e-extensions $M_i^e$ 
and $M_{i+1}^e$, respectively, for input $(\mathbf{u_o},\mathbf{y}_o)$.
 
By Proposition \ref{Prop:YHierarchy},
we have $Y(q_{i+1}(t)) \subseteq Y(q_i(t))$, for all $t \geq 0$.
Thus we have for every $t \geq 0$:
\begin{displaymath}
e_{i+1}(t) =
\max_{y_1,y_2 \in Y(q_{i+1}(t))} \mu_{\Delta} (\beta(y_1,y_2)) 
\leq 
\max_{y_1,y_2 \in Y(q_{i}(t))} \mu_{\Delta} (\beta(y_1,y_2)) 
= e_{i}(t).
\end{displaymath}

We can now write for any $\gamma \geq 0$
\begin{eqnarray*}
-e_i(t) \leq -e_{i+1}(t), \textrm{      } \forall t 
& \Leftrightarrow& 
\gamma \rho_{\Delta}(u_o(t)) -  e_i(t)
\geq \gamma \rho_{\Delta}(u_o(t)) - e_{i+1}(t),
\textrm{      } \forall t \\
& \Rightarrow & 
\sum_{t=0}^{T } \hat{\gamma}_i \rho_{\Delta}(u_o(t)) - \mu_{\Delta}(w(t)) 
\geq 
\sum_{t=0}^{T} \hat{\gamma}_i \rho_{\Delta}(u_o(t)) - e(t),
\textrm{      } \forall T \\
& \Rightarrow & 
\sum_{t=0}^{T }\hat{\gamma}_i \rho_{\Delta}(u_o(t)) - \mu_{\Delta}(w(t)) 
\geq 
\inf_{t \geq 0} \sum_{t=0}^{T} \hat{\gamma}_i \rho_{\Delta}(u_o(t)) - e(t),
\textrm{      } \forall T \\
& \Rightarrow & 
\inf_{T \geq 0} \sum_{t=0}^{T }\hat{\gamma}_i \rho_{\Delta}(u_o(t)) - \mu_{\Delta}(w(t)) 
\geq 
\inf_{t \geq 0} \sum_{t=0}^{T} \hat{\gamma}_i \rho_{\Delta}(u_o(t)) -e(t)
\end{eqnarray*}
It thus follows that $\hat{\gamma}_i \geq \hat{\gamma}_{i+1}$.
\end{proof}
\smallskip

Note that Lemma \ref{Lemma:ConditionCbound} does not require the additional assumptions
(output nested $\{M\}_{i=1}^{\infty}$ and flat $\mu_{\Delta}$) that Lemma
\ref{Lemma:ConditionC} requires to hold.
That is because the gain bounds are inherently conservative, 
effectively considering a `worst case' scenario.

%%%%%%%%%%%%%%%%%%%%%%%%%%%%%%%%%%%%%%%%%%%%%%%%%%%%
%%%%%%%%%%%%%%%%%%%%%%%%%%%%%%%%%%%%%%%%%%%%%%%%%%%%
%%%%%%%%%%%%%%%%%%%%%%%%%%%%%%%%%%%%%%%%%%%%%%%%%%%%
\section{Semi-Completeness of the Construct}
\label{Sec:Completeness}

In this Section, we prove one additional property of the given construct:
Intuitively, we show that if a deterministic finite state machine exists that can accurately predict
the sensor output of a plant after some initial transient,
then our construct recovers it. 
While the resulting
DFM generated by our construct is not expected to be minimal 
(due to the inherent redundancy in this description, see the discussion in Section
\ref{SSec:ConstructionInspiration}), 
this property suggests that our construct is well-suited for addressing analytical questions
about convergence of the approximate models to the original plant.

%%%%%
\smallskip
\begin{thm}
Consider a plant $P$ as in (\ref{eq:Plant}), 
a performance objective as in (\ref{eq:ObjectiveP}),
a positive definite choice of $\mu_{\Delta}: \mathcal{W} \rightarrow \mathbb{R}_+$,
and a sequence $\{ M_i \}_{i=1}^{\infty}$ constructed following the procedure given in 
Section \ref{SSec:ConstructionDetails},
with $\{ \gamma_i\}_{i=1}^{\infty}$ denoting the gains of the corresponding approximation errors $\{ \Delta_i \}_{i=1}^{\infty}$
shown in Figure \ref{Fig:Structure}.
Assume there exists a DFM $M$ with fixed initial condition,
such that the corresponding $\Delta$ obtained by interconnecting $P$ and $M$ as in Figure
\ref{Fig:Structure} has gain $\gamma=0$.
Then $\gamma_{i^*}=0$ for some index $i^*$.
Moreover, $\gamma_i =0$ for all $i \geq i^*$.
\end{thm}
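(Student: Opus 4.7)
The plan is to first establish existence of a finite index $i^*$ with $\gamma_{i^*} = 0$, and then to obtain the moreover clause as a direct corollary of Lemma~\ref{Lemma:ConditionC}. I begin by unpacking the hypothesis on $M$: since $\mu_{\Delta}$ is positive definite and the error system $\Delta$ arising from the interconnection of $P$ and $M$ has gain $\gamma = 0$, the gain-stability inequality collapses to $\inf_T \sum_{t=0}^T -\mu_{\Delta}(w(t)) > -\infty$. Positive definiteness of $\mu_{\Delta}$ on the discrete alphabet $\mathcal{W}$ therefore forces only finitely many indices $t$ with $w(t) \neq 0$ along every feasible $(\mathbf{u},(\mathbf{y},\mathbf{v})) \in P$. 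Equivalently, for each such trajectory there is a transient time $T_0$ (possibly trajectory-dependent) after which $M$ reproduces the plant output exactly: $\tilde y(t) = y(t)$ for every $t \geq T_0$.

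The next step reduces the claim $\gamma_{i^*} = 0$ to a statement about the sets $Y(q_{i^*}(t))$. Along any feasible trajectory, Proposition~\ref{Prop:Xinclusion} gives $x(t) \in X(q_{i^*}(t))$, hence $y(t) \in Y(q_{i^*}(t))$. Whenever $|Y(q_{i^*}(t))| = 1$, the freedom in the definition of $g_{i^*}$ in Section~\ref{SSec:ConstructionDetails} collapses, so we must have $\tilde y_{i^*}(t) = y(t)$ and thus $w_{i^*}(t) = 0$. It therefore suffices to exhibit a finite $i^*$ such that, for every feasible trajectory, $|Y(q_{i^*}(t))| = 1$ for all but finitely many $t$: the resulting $\sum_t \mu_{\Delta}(w_{i^*}(t)) < \infty$ on every trajectory yields $\gamma_{i^*} = 0$.

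The heart of the argument and the main obstacle is precisely this last $|Y(q_{i^*}(t))| = 1$ claim. My plan is to fix a feasible trajectory, pick an arbitrary $x' \in X(q_{i^*}(t))$, and show $g(x') = y(t)$. By the construction of $X(\cdot)$ in Section~\ref{SSec:ConstructionDetails}, $x'$ comes from some $x_o'$ at time $t - i^*$ consistent with the recent $i^*$-pair window $(u(t-i^*),y(t-i^*)),\ldots,(u(t-1),y(t-1))$. The idea is to realize $x_o'$ as the time-$(t-i^*)$ state of an alternate feasible trajectory of $P$, feed this alternate trajectory into $M$, and invoke the eventual-exact prediction property on it to conclude $g(x') = g_M(\tilde q_M(t))$ for the resulting $M$-state $\tilde q_M(t)$. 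The key technical obligation is to choose $i^*$ (the natural candidate being $i^* = |\mathcal{Q}_M|$, possibly inflated by a uniform transient bound) large enough that the common recent $i^*$-pair window forces $g_M$-agreement between $\tilde q_M(t)$ and the $M$-state $q_M(t)$ associated with the original trajectory, whose $g_M$-value is $y(t)$. I would carry this out via a pigeonhole / Myhill--Nerode-style argument on the finite set $\mathcal{Q}_M$, exploiting the fact that $g_M \circ \Phi$ (where $\Phi$ maps feasible histories to $M$-states) factors through $\mathcal{Q}_M$ and therefore has bounded ``state complexity'', so that sufficiently long recent history determines the next-output value on every feasible continuation. This passage from ``$M$ predicts correctly'' to ``the last $i^*$ pairs determine the output'' is the delicate step.

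Finally, for the moreover clause, once $\gamma_{i^*} = 0$ has been established and $\{M_i\}$ is assumed output-nested, Lemma~\ref{Lemma:ConditionC} immediately furnishes $\gamma_{i+1} \leq \gamma_i$ for every $i \geq 1$. Combined with non-negativity of the gains, a one-line induction starting from $\gamma_{i^*} = 0$ yields $\gamma_i = 0$ for every $i \geq i^*$, completing the proof.
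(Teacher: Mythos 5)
Your skeleton matches the paper's --- extract a transient from the gain-zero hypothesis, reduce $\gamma_{i^*}=0$ to the claim that $|Y(q_{i^*}(t))|=1$ eventually, and dispatch the ``moreover'' clause with Lemma~\ref{Lemma:ConditionC} --- but the step you yourself flag as delicate is exactly the one the paper proves and you do not, and the route you sketch for it would not go through as stated. Two concrete problems. First, you only establish a \emph{trajectory-dependent} transient $T_0$, whereas the argument needs a single $T^*$ valid for every feasible trajectory: the paper obtains this uniform $T^*$ from the finiteness of $\mathcal{W}$ (so $\mu_{\Delta}$ is bounded away from zero off $w=0$) together with the finiteness of the state set of $M$, and then simply sets $i^*=T^*$. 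Without uniformity, the contradiction you would eventually need (``$M$ mispredicts at some time past the transient'') cannot be run on the alternate trajectory, whose own transient might exceed the time in question. Second, your pigeonhole/Myhill--Nerode claim --- that a common recent window of length $|\mathcal{Q}_M|$ forces $g_M$-agreement between the two $M$-states --- is not true for a general DFM (two distinct states can be fed the same input word forever without ever merging or agreeing in output), and the hypothesis that $M$ predicts correctly after a transient is precisely what you would need to exploit to repair it; as written, this step is essentially a restatement of what must be proved.

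The paper sidesteps all of this with one observation you missed: since membership of $q=(y_1,\dots,y_{i^*},u_1,\dots,u_{i^*})$ in $\mathcal{Q}_{i^*,F}$ and the set $X(q)$ are defined by the mere existence of a consistent $x_o\in\mathbb{R}^n$, and the initial state of $P$ is unconstrained, any two witnesses $x_o',x_o''$ with $g$-distinct images can be taken as \emph{initial} states of $P$ at time $0$. The two resulting trajectories then share their entire input-output history over $[0,T^*-1]$, so the deterministic $M$ (fixed initial condition) is in the same state at time $T^*$ and emits a single $\tilde y(T^*)$, which cannot equal both $y'$ and $y''$; hence $w(T^*)\neq 0$ on one of the runs, contradicting the uniform $w(t)=0$ for all $t\geq T^*$. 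This rules out $|Y(q)|>1$ for every $q\in\mathcal{Q}_{i^*,F}$ in two lines, with no window-at-arbitrary-time analysis and no state-complexity argument; combined with Proposition~\ref{Prop:Xinclusion} it gives $w_{i^*}(t)=0$ for all $t\geq i^*$ and hence $\gamma_{i^*}=0$. Your handling of the reduction and of the output-nested ``moreover'' clause is fine and matches the paper.
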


\begin{proof}
Assume a DFM $M$ with the stated properties exists,
and let $w(t)$ be the output of the system $\Delta$ constructed by interconnecting 
$P$ and $M$ as shown in Figure \ref{Fig:Structure}.
By assumption, we have
\begin{displaymath}
\inf_{T \geq 0} \sum_{t=0}^{T} 0. \rho_{\Delta}(u(t)) - \mu_{\Delta}(w(t)) > -\infty
\Leftrightarrow \sup_{T \geq 0} \sum_{t=0}^{T} \mu_{\Delta}(w(t)) < \infty
\end{displaymath} 
Since $\mathcal{W}$ is finite, the cardinality of $\mu_{\Delta}(\mathcal{W})$ is also finite,
as is that of the state set of $M$.
Thus there must exists a time $T^*$ such that $\mu_{\Delta}(w(t))=0$ for all $t \geq T^*$, 
or equivalently  $w(t)=0$ for all $t \geq T^*$ (by the positive definiteness of $\mu_{\Delta}$).
Now let $i^*=T^*$,
and consider the corresponding DFM $M_{i^*}$ in the constructed sequence.
We claim that $|Y(q_{i^*})|=1$ for every $q_i^{*} \in \mathcal{Q}_{i^*,F}$.

The proof is by contradiction: Indeed,
suppose that $|Y(q_{i^*})| >1$ for some $q_{i^*}=(y_1,\hdots,y_{i^*},u_1,\hdots,u_{i^*})' \in \mathcal{Q}_{i^*,F}$.
Thus, there exists an input sequence, namely $u(0)=u_1$, $u(1)=u_2$,$\hdots$, $u(T^*-1)=u_{i^*}$
with two corresponding feasible sensor outputs of $P$ given by
$y(0)=y_1$, $y(1)=y_2$,$\hdots$, $y(T^*-1)=y_{i^*}$, $y(T^*)=y'$
and 
$y(0)=y_1$, $y(1)=y_2$,$\hdots$, $y(T^*-1)=y_{i^*}$, $y(T^*)=y''$
where $y' \neq y''$.
Since $M$ has fixed initial condition, its response to the input sequence is fixed,
and it thus follows that $w(T^*) \neq 0$ for some run, contradicting the fact that $w(t)=0$
for all $t \geq T^*$.
This cannot be, and hence $|Y(q_{i^*})| =1$ for all $q_{i^*} \in \mathcal{Q}_{i^*,F}$.

It follows from this and Proposition \ref{Prop:Xinclusion} that $y_{i^*}(t) =y(t)$ for every $t \geq T^*$,
and thus $\gamma_{i^*}=0$. 
Finally, when $i > i^*$, 
$|Y(q_{i})| = 1$ for every $q_{i} \in \mathcal{Q}_{i,F}$, and $\gamma_{i}=0$.
\end{proof}
\smallskip

%%%%%%%%%%%%%%%%%%%%%%%%%%%%%%%%%%%%%%%%%%%%%%%%%%%%%%%%%%%%%%%%%%%%%%%%%%%%%%%%
\color{black}
\section{Conclusions and Future Work}
\label{Sec:Conclusions}

In this paper, 
we revisited the recently proposed notion of $\rho/\mu$ approximation and a corresponding 
particular structure for the approximate models and approximation errors.
We generalized this structure for the non-binary alphabet setting,
and we showed that the cardinality of the minimal disturbance alphabet 
that can be used in this setting equals that of the sensor output alphabet.
We then proposed a general, conceptual procedure for generating a sequence
of finite state machines for systems over finite alphabets that are not subject to exogenous inputs.
We explicitly derived conditions under which the resulting constructs,
used in conjunction with the generalized structure, 
satisfy the three required properties of $\rho/\mu$ approximations,
and we proposed a readily verifiable sufficient condition to ensure that the gain of the
approximation error is finite.
We also showed that these constructs exhibit a `semi-completeness' property,
in the sense that if a finite state machine exists that can
perfectly predict the sensor output after some transient,
then our construct recovers it.

Our future work will focus on two directions:
\begin{enumerate}
\item At the theoretical level, it is clear from the construct that the problem of 
approximation and that of state estimation under coarse sensing are closely intertwined.
We will thus focus on understanding the limitations of approximating certain classes of systems
using these constructs,
or at a more basic level,
the limitations of reconstructing the state under coarse sensing and finite memory constraints. 
\item At the algorithmic level, 
we will look into refining this procedure by developing a recursive version that
allocates available memory in a more selective manner,
in line with the dynamics of the system.
\end{enumerate}

%%%%%%%%%%%%%%%%%%%%%%%%%%%%%%%%%%%%%%%%%%%%%%%%%%%%%%%%%%%%%%%%%%%%%%%%%%%%%%%%
\section{Acknowledgments}

This research was supported by NSF CAREER award 0954601 
and AFOSR Young Investigator award FA9550-11-1-0118.

%%%%%%%%%%%%%%%%%%%%%%%%%%%%%%%%%%%%%%%%%%%%%%%%%%%%%%%%%%%%%%%%%%%%%%%%%%%%%%%%
%%% THE BIBLIOGRAPHY

\bibliographystyle{plain}
\bibliography{ReferencesNew}

\begin{thebibliography}{10}

\bibitem{CONF:AalTar2012}
F.~Aalamifar and D.~C. Tarraf.
\newblock An iterative algorithmic implementation of input-output finite state
  approximations.
\newblock In {\em Proceedings of the $51^{st}$ {IEEE} Conference on Decision
  and Control}, pages 6735--6741, Maui, HI, December 2012.

\bibitem{JOUR:AlHeLaPa2000}
R.~Alur, T.~Henzinger, G.~Lafferriere, and G.~Pappas.
\newblock Discrete abstractions of hybrid systems.
\newblock {\em Proceedings of the IEEE}, 88(2):971--984, 2000.

\bibitem{MAGAZINE:BeBiEgFrKlPa2007}
C.~Belta, A.~Bicchi, M.~Egerstedt, E.~Frazzoli, E.~Klavins, and G.~J. Pappas.
\newblock Symbolic planning and control of robot motion: State of the art and
  grand challenges.
\newblock {\em IEEE Robotics and Automation Magazine}, 14(1):61--70, March
  2007.

\bibitem{JOUR:ChuKro2000}
A.~Chutinan and B.~H. Krogh.
\newblock Computing approximating automata for a class of hybrid systems.
\newblock {\em Mathematical and Computer Modelling of Dynamical Systems},
  6(1):30--50, 2000.

\bibitem{JOUR:ChuKro2001}
A.~Chutinan and B.~H. Krogh.
\newblock Verification of infinite-state dynamic systems using approximate
  quotient transition systems.
\newblock {\em {IEEE} Transactions on Automatic Control}, 46(9):1401--1410,
  2001.

\bibitem{JOUR:CuKrNi1998}
J.~E.~R. Cury, B.~H. Krogh, and T.~Niinomi.
\newblock Synthesis of supervisory controllers for hybrid systems based on
  approximating automata.
\newblock {\em {IEEE} Transactions on Automatic Control}, 43(4):564--568, 1998.

\bibitem{JOUR:GeSm1997}
T.~T. Georgiou and M.~C. Smith.
\newblock Robustness analysis of nonlinear feedback systems: An input-output
  approach.
\newblock {\em IEEE Transactions on Automatic Control}, 42(9):1200--1221,
  September 1997.

\bibitem{JOUR:GiPa2007}
A.~Girard and G.~J. Pappas.
\newblock Approximation metrics for discrete and continuous systems.
\newblock {\em IEEE Transactions on Automatic Control}, 52(5):782--798, 2007.

\bibitem{JOUR:KloBel2008}
M.~Kloetzer and C.~Belta.
\newblock A fully automated framework for control of linear systems from
  temporal logic specifications.
\newblock {\em IEEE Transactions on Automatic Control}, 53(1):287--297,
  February 2008.

\bibitem{JOUR:KoImHi2011}
K.~Kobayashi, J.~Imura, and K.~Hiraishi.
\newblock Stabilization of finite automata with application to hybrid systems
  control.
\newblock {\em Discrete Event Dynamic Systems}, 21(4):519--545, 2011.

\bibitem{TR:Lee2008}
E.~A. Lee.
\newblock Cyber physical systems: Design challenges.
\newblock Technical Report No. UCB/EECS-2008-8, University of California,
  Berkeley, January 2008.

\bibitem{BOOK:Lj1999}
L.~Ljung.
\newblock {\em System Identification: Theory for the User}.
\newblock Information and System Sciences. Prentice Hall, second edition, 1999.

\bibitem{JOUR:Lu1994}
J.~Lunze.
\newblock Qualitative modeling of linear dynamical systems with quantized state
  measurements.
\newblock {\em Automatica}, 30:417--431, 1994.

\bibitem{JOUR:LuNiSc1999}
J.~Lunze, B.~Nixdorf, and J.~Schr{\"o}der.
\newblock Deterministic discrete-event representation of continuous-variable
  systems.
\newblock {\em Automatica}, 35:395--406, March 1999.

\bibitem{BOOK:MatSav2000}
A.~S. Matveev and A.~V. Savkin.
\newblock {\em Qualitative Theory of Hybrid Dynamical Systems}.
\newblock Birkh\"{a}user, Boston, 2000.

\bibitem{BOOK:Mi1989}
R.~Milner.
\newblock {\em Communication and Concurrency}.
\newblock International Series in Computer Science. Prentice Hall, Upper Saddle
  River, NJ, 1989.

\bibitem{JOUR:MoRa1999}
T.~Moor and J.~Raisch.
\newblock Supervisory control of hybrid systems whithin a behavioral framework.
\newblock {\em Systems \& Control Letters, Special Issue on Hybrid Control
  Systems}, 38:157--166, 1999.

\bibitem{JOUR:MoRaOY2002}
T.~Moor, J.~Raisch, and S.~D. {O'Young}.
\newblock Discrete supervisory control of hybrid systems by l-complete
  approximations.
\newblock {\em Discrete Event Dynamic Systems: Theory and Applications},
  12:83--107, 2002.

\bibitem{CHAPTER:Pa1981}
D.~Park.
\newblock Concurrency and automata on infinite sequences.
\newblock In {\em Proceedings of the Fifth GI Conference on Theoretical
  Computer Science}, number 104 in Lecture Notes in Computer Science, pages
  167--183. Springer-Verlag, 1981.

\bibitem{JOUR:PoGiTa2008}
G.~Pola, A.~Girard, and P.~Tabuada.
\newblock Approximately bisimilar symbolic models for nonlinear control
  systems.
\newblock {\em Automatica}, 44(10):2508--2516, October 2008.

\bibitem{JOUR:RaOY1998}
J.~Raisch and S.~D. {O'Young}.
\newblock Discrete approximation and supervisory control of continuous systems.
\newblock {\em IEEE Transactions on Automatic Control}, 43(4):569--573, April
  1998.

\bibitem{JOUR:RaWo1987}
P.~J. Ramadge and W.~M. Wonham.
\newblock Supervisory control of a class of discrete event processes.
\newblock {\em SIAM Journal on Control and Optimization}, 25(1):206--230, 1987.

\bibitem{JOUR:RaWo1989}
P.~J. Ramadge and W.~M. Wonham.
\newblock The control of discrete event systems.
\newblock {\em Proceedings of the IEEE}, 77(1):81--98, January 1989.

\bibitem{JOUR:Reiszi2011}
G.~Rei{\ss}ig.
\newblock Computing abstractions of nonlinear systems.
\newblock {\em {IEEE} Transactions on Automatic Control}, 56(11):2583--2598,
  November 2011.

\bibitem{JOUR:Ta2008}
P.~Tabuada.
\newblock An approximate simulation approach to symbolic control.
\newblock {\em IEEE Transactions on Automatic Control}, 53(6):1406--1418, July
  2008.

\bibitem{BOOK:Ta2009}
P.~Tabuada.
\newblock {\em Verification and Control of Hybrid Systems: A Symbolic
  Approach}.
\newblock Springer, 2009.

\bibitem{JOUR:TaAmJuPa2008}
P.~Tabuada, A.~Ames, A.~A. Julius, and G.~J. Pappas.
\newblock Approximate reduction of dynamical systems.
\newblock {\em Systems \& Control Letters}, 57(7):538--545, 2008.

\bibitem{CONF:TaBaCSM2012}
P.~Tabuada, A.~Balkan, S.~Y. Caliskan, Y.~Shoukry, and R.~Majumdar.
\newblock Input-output robustness for discrete systems.
\newblock In {\em Proceedings of the $10^{th}$ ACM International Conference on
  Embedded Software}, pages 217--226. ACM Press, New York, NY, 2012.

\bibitem{CONF:Tarraf2012}
D.~C. Tarraf.
\newblock Constructing $\rho / \mu$ approximations for systems with no
  exogeneous input.
\newblock In {\em Proceedings of the $51^{st}$ {IEEE} Conference on Decision
  and Control}, pages 6102--6106, December 2012.

\bibitem{CONF:Tarraf2012a}
D.~C. Tarraf.
\newblock Constructing $\rho / \mu$ approximations from input/output snapshots
  for systems over finite alphabets.
\newblock In {\em Proceedings of the $50^{th}$ Allerton Conference on
  Communication, Control and Computing}, pages 1504--1509, October 2012.

\bibitem{JOUR:Tarraf2012}
D.~C. Tarraf.
\newblock A control-oriented notion of finite state approximation.
\newblock {\em {IEEE} Transactions on Automatic Control}, 56(12):3197--3202,
  December 2012.

\bibitem{CONF:Tarraf2013b}
D.~C. Tarraf.
\newblock Completeness and other properties of input-output based finite
  approximations.
\newblock In {\em Proceedings of the $52^{nd}$ {IEEE} Conference on Decision
  and Control}, pages 3318--3325, Florence, Italy, December 2013.

\bibitem{CONF:TaDaMe2005}
D.~C. Tarraf, M.~A. Dahleh, and A.~Megretski.
\newblock Stability of deterministic finite state machines.
\newblock In {\em Proceedings of the American Control Conference}, pages
  3932--3936, June 2005.

\bibitem{CONF:TaDu2011}
D.~C. Tarraf and L.~A. {Duffaut Espinosa}.
\newblock On finite memory approximations constructed from input/output
  snapshots.
\newblock In {\em Proceedings of the 50th IEEE Conference on Decision \&
  Control and the European Control Conference}, pages 3966--3973, Orlando, Fl,
  December 2011.

\bibitem{CHAPTER:TaMeDa2007}
D.~C. Tarraf, A.~Megretski, and M.~A. Dahleh.
\newblock Finite state controllers for stabilizing switched systems with binary
  sensors.
\newblock In G.~Buttazzo, A.~Bicchi, and A.~Bemporad, editors, {\em Hybrid
  Systems: Computation and Control}, volume 4416 of {\em Lecture Notes in
  Computer Science}, pages 543--557. Springer-Verlag, April 2007.

\bibitem{JOUR:TaMeDa2008}
D.~C. Tarraf, A.~Megretski, and M.~A. Dahleh.
\newblock A framework for robust stability of systems over finite alphabets.
\newblock {\em IEEE Transactions on Automatic Control}, 53(5):1133--1146, June
  2008.

\bibitem{JOUR:TaMeDa2011}
D.~C. Tarraf, A.~Megretski, and M.~A. Dahleh.
\newblock Finite approximations of switched homogeneous systems for controller
  synthesis.
\newblock {\em IEEE Transactions on Automatic Control}, 56(5):1140--1145, May
  2011.

\bibitem{CHAPTER:TiKh2002}
A.~Tiwari and G.~Khanna.
\newblock Series of abstractions for hybrid automata.
\newblock In {\em Hybrid Systems: Computation and Control}, volume 2289 of {\em
  Lecture Notes in Computer Science}. Springer, 2002.

\bibitem{MAGAZINE:Wi2007}
J.~C. Willems.
\newblock The behavioral approach to open and interconnected systems.
\newblock {\em IEEE Control Systems Magazine}, 27:46--99, 27 2007.

\bibitem{JOUR:YorBel2010}
B.~Yordanov and C.~Belta.
\newblock Formal analysis of discrete-time piecewise affine systems.
\newblock {\em {IEEE} Transactions on Automatic Control}, 55(12):2834--2840,
  2010.

\end{thebibliography}

\end{document}